\newcommand\justsubstitute[2]{#2}
\newcommand\revise[2]{#2}
\title[Particle \justsubstitute{Tracking}{Locating} on Patch
Searching]{An efficient particle \revise{tracking}{locating} method on
unstructured meshes in two and three dimensions based on patch
searching}
\author[S. Chen]{Shuang Chen} \address{Yau Mathematical Sciences
Center, Tsinghua University, Beijing 100084, P.R. China}
\email{chenshua24@mails.tsinghua.edu.cn}
\author[F. Yang]{Fanyi Yang} \address{School of Mathematics
, Sichuan University, Chengdu 610065, P.R. China}
\email{yangfanyi@scu.edu.cn}
\newcommand{\bm}[1]{\boldsymbol{#1}}
\newcommand{\mb}[1]{\mathbb{#1}}
\newcommand{\mc}[1]{\mathcal{#1}}
\newcommand{\mr}[1]{\mathrm{#1}}
\newcommand{\bmr}[1]{\bm{\mr{#1}}}
\newcommand{\wt}[1]{\tilde{#1}}
\newcommand{\wh}[1]{\hat{#1}}
\newcommand{\mD}[1]{\mc{D}(#1)}
\newcommand{\tred}[1]{\textcolor{red}{#1}}
\def\mB{\mc{B}}
\def\mT{\mc{T}}
\def\MTh{\mc{T}_h}
\def\MCs{\mc{C}_{s}}
\def\MCsc{\mc{C}_s^{\circ}}
\def\MCscI{\mc{C}_{s}^{\circ, \bmr{c}}}
\def\MCscB{\mc{C}_{s}^{\circ, \bmr{b}}}
\def\MCscE{\mc{C}_{s}^{\circ, \bmr{e}}}
\def\MCscF{\mc{C}_{s}^{\circ, \bmr{f}}}
\def\MFh{\mc{F}_h}
\def\MFhi{\mc{F}_h^i}
\def\MFhb{\mc{F}_h^b}
\def\MEh{\mc{E}_h}
\def\MEhi{\mc{E}_h^i}
\def\MEhb{\mc{E}_h^b}
\def\mN{\mc{N}}
\def\mE{\mc{E}}
\def\mF{\mc{F}}
\def\mM{\mc{M}}
\def\mC{\mc{C}}
\def\MNh{\mc{N}_h}
\def\MNhi{\mc{N}_h^i}
\def\MNhb{\mc{N}_h^b}
\def\MMs{\mc{M}_s}
\def\MCMEhc{\mc{C}_{\MEh}^{\circ}}
\def\MCec{\mc{C}_e^{\circ}}
\def\MCBvc{\mc{C}_{B_{\bv}}^{\circ}}
\def\MCNhsc{\mc{C}_{\mc{N}_h}^{\circ}}
\def\MCNhssc{\mc{C}_{\mc{N}_h^*}^{\circ}}
\def\bv{\bm{\nu}}
\def\bvs{\bm{\varsigma}}
\def\bom{\bm{\omega}}
\def\bc{\bm{c}}
\def\mO{\mc{O}}
\def\bchi{\bm{\chi}}
\def\sign{\mr{sign}}
\def\bzta{\bm{\zeta}}
\def\bq{\bm{q}}
\newtheorem{lemma}{Lemma}
\newtheorem{remark}{Remark}
\newcommand{\step}[1]{\ifmmode{\bmr{#1}}^*\else${\bmr{#1}}^*$\fi}
\begin{document}

\maketitle

\begin{abstract}
  We present a particle \revise{tracking}{locating} method for
  unstructured meshes in two and three dimensions. 
  Our algorithm is based on a patch searching process, and includes
  two steps. 
  We first locate the given point to a patch near a vertex, and then
  the host element is determined within the patch domain.
  Here, the patch near a vertex is the domain of elements around this
  vertex.
  We prove that in the first step the patch can be rapidly identified
  by constructing an auxiliary Cartesian grid with a prescribed
  resolution. 
  Then, the second step can be converted into a searching problem,
  which can be easily solved by searching algorithms.
  Only coordinates to particles are required in our method.
  We conduct a series of numerical tests in two and three dimensions
  to illustrate the robustness and efficiency of our method.

\noindent \textbf{keywords}: 
particle \revise{tracking}{locating}; 
unstructured mesh;
auxiliary Cartesian grid;

\end{abstract}

\section{Introduction}
\label{sec_introduction}
To locate the simulated particles in the mesh system is frequently 
employed in fields of computational fluid dynamics, such as Lagrangian
particle tracking methods, particle-in-cell methods and multi-phase
flow simulations \cite{Sani2009set, Chorda2002efficient}.
The particle \revise{tracking}{locating} problem on a mesh reads: {\it
given coordinates of a point, then determine the element that contains
it (the host element)}.
In the case of the uniform Cartesian mesh, this problem is
very simple and straightforward. 
But for unstructured meshes, which are widely used in problems
involving complex geometry and bring more flexibility, developing an
efficient \revise{tracking}{locating} method is not trivial.

For the particle \revise{tracking}{locating} problem on unstructured
meshes, several methods have been proposed, which can be roughly
classified into two types \cite{Lowier1990vectorized, Wang2022GPU,
Li2019fast}: the auxiliary structured Cartesian grid method and the
neighbourhood searching method. 
For the first method, the main idea is to construct a background
structured grid entirely covering the computational mesh, and
for each cell in the auxiliary grid, a list of unstructured elements
intersecting this cell is stored. 
The point is first located on the auxiliary structured grid, which is
a very easy task,
and then the point-in-element tests are conducted on elements in the
corresponding list to seek the host element. 
We refer to \cite{Seldner1988algorithms, Muradoglu2006auxiliary,
Li2019fast} for some typical methods.
The neighbourhood searching method is another popular and widely
used locating method. 
This method requires the trajectory of the given particle and the host
element of the starting point.
In performing this method, a next possible host element in
neighbours of that host element is determined by some specific
algorithms.
The searching process will be repeated until the final host element is
reached. 
Generally, a searching path along the trajectory will be constructed
for each particle and eventually leads to the host element.
In \cite{Lowier1995robust}, the proposed method used the barycentric
coordinates of the point relative to the triangle to find the next
search direction.
In \cite{Chen1999new}, the authors developed a directed search method
by examining the elements passed through by the given particle using
determinants.
In \cite{Martin2009particle}, the authors utilized the outward normal
vector on faces to find the next possible host element. 
A similar searching method by seeking the face that the particle
crosses it to leave the current element was given in
\cite{Macpherson2009particle},
and this method can be used for polygonal elements. 
In \cite{Wang2022GPU}, the authors presented an improved searching
approach and combined the CPU-acceleration technique.
More methods of such type are referred to \cite{Sani2009set,
Capodaglio2017particle, Haselbacher2007efficient, Li2001efficient,
Chorda2002efficient, Kuang2008new}.
The neighbourhood searching method is usually simple to implement, has
a great efficiency, and allows arbitrary convex polygonal (polyhedral)
elements in the mesh. 
But if the starting position for a particle is not available, the
computational efficiency of such methods will be greatly affected
because finding a suitable starting point is also not a trivial task
\cite{Li2019fast}.

\revise{In this paper, we propose a particle tracking method for
unstructured meshes based on a patch searching procedure in two and
three dimensions.}{In this paper, we propose a particle locating
method for triangular and tetrahedral meshes in two and three
dimensions, based on a patch searching procedure.}
Here, the patch near a vertex/an edge is the domain of the union of
elements around this vertex/this edge. 
Our algorithm mainly consists of two steps. 
For a given point, we first search a vertex whose corresponding patch
contains the point, and the host element is further determined in the
patch domain.
We show that the first step can be rapidly implemented by
constructing a background Cartesian grid. 
Similar to the auxiliary grid method, we overlay a structured
Cartesian grid with a prescribed resolution over the unstructured
mesh.
We prove that any structured cell will be entirely
contained in a patch, which allows us to introduce a mapping from
Cartesian cells to vertices. 
From this mapping, the vertex for the given point can be easily found. 
In two dimensions, the second step is then shown to be equivalent to
locating an angle in an ascending sequence, which can be readily
solved. 
In three dimensions, the second step needs an extra moving step, where
we construct a new point from the given point sharing the same host
element. 
The new point will be far from all vertices, which allows us to prove 
that this point will be contained in a patch near an edge.
Then the host element can be
determined by locating a vector in a plane as two dimensions. 
Compared to traditional auxiliary structured grid methods, any
point-in-element test is not required in our algorithm, which can
significantly save the computational cost and increase the efficiency.
Another advantage in our method is that only coordinates of the
given point is needed, and the locating time is robust to the
position of the point.
The grid spacing of the Cartesian grid is explicitly given, and there
is no need to construct a very fine grid in our method.
The robustness and the efficiency are numerically confirmed by a
series of tests in two and three dimensions. 
\revise{}{ 
Currently, the proposed method and the theoretical analysis
are established on triangular (tetrahedral) meshes.
We additionally provide a numerical test on a 2D polygonal mesh to
demonstrate the performance of the algorithm. 
The detailed extension to polygonal (polyhedral) meshes is considered
in a future work.}
In the preparation stage, the main step is to establish the relation
mappings from Cartesian cells to vertices and edges. 
The details of the computer implementation are presented, and the
computational cost of the initialization is demonstrated to increase
linearly with the number of elements.

The rest of this paper is organized as follows. 
In Section \ref{sec_preliminaries}, we introduce the notation related
to the partition. 
In Section \ref{sec_method}, we present the locating algorithm in both
two and three dimensions. 
The details to the computer
implementation are described in Section \ref{sec_implement}.
In Section \ref{sec_numericalresults}, we conduct a series of
numerical tests to demonstrate the numerical performance.
\revise{}{
Section \ref{sec_conclusion} concludes the paper.  
Finally, a list of
notation can be found at the end of the paper. 
}

\section{Preliminaries}
\label{sec_preliminaries}
We first introduce the notation related to the domain and
the partition.
Let $\Omega \subset \mb{R}^d(d = 2, 3)$ be a polygonal (polyhedral)
domain in two or three dimensions, and we let $\MTh$ be a
quasi-uniform partition over $\Omega$ into a family of triangles
(tetrahedrons). 
For any $K \in \MTh$, we denote by $h_K$ the diameter of the
circumscribed ball of $K$, and by $\rho_K$ the radius of the
inscribed ball of $K$, and by $w_K$ the width of $K$.
The definitions indicate that $2\rho_K \leq w_K \leq h_K$ for $\forall
K \in \MTh$.
We define 
\begin{displaymath}
  h :=  \max_{K \in \MTh} h_K, \quad w :=  \min_{K \in \MTh} w_K,
  \quad \rho := \min_{K \in \MTh} \rho_K, 
\end{displaymath}
where $h$ is the mesh size to $\MTh$.
The mesh $\MTh$ is assumed to be quasi-uniform in the sense that there
exists a constant $C_{\nu}$ independent of $h$ such that $h \leq
C_{\nu} \rho$.
The quasi-uniformity of the mesh brings a minimum angle
condition to $\MTh$: there exists a constant $\alpha$ depending on
$C_{\nu}$ such that \cite{Brandts2008equivalence}
\begin{equation}
  \begin{aligned}
    \alpha_K & \geq \alpha, \quad \forall K \in \MTh, \quad d = 2,
    \\ 
    \alpha_{D, K}  \geq \alpha, \quad \alpha_{F, K} &\geq \alpha,
    \quad \forall K \in \MTh, \quad d = 3, 
  \end{aligned}
  \label{eq_miniangle}
\end{equation}
where $\alpha_K$ is the minimum angle of the triangle $K$ in two
dimensions, and in three dimensions, $\alpha_{D, K}$ is the minimum of
values of dihedral angles between faces of $K$ and $\alpha_{F, K}$ is
the minimum angle in all four triangular faces of $K$.

We denote by $\MNh$ the set of all nodes in $\MTh$, and we 
decompose $\MNh$ into $\MNh = \MNhi + \MNhb$, where $\MNhi := \{\bv
\in \MNh: \ \bv \in \Omega\}$ and  $\MNhb := \{ \bv \in \MNh: \ \bv
\in \partial \Omega\}$ consist of all interior nodes and the nodes
lying on the boundary $\partial \Omega$, respectively, see
Fig.~\ref{fig_node}.
For any $\bv \in \MNh$, we denote by $\mT_{\bv} := \{K \in \MTh: \ \bv
\in \partial {K} \}$ the set of elements sharing a common vertex
$\bv$. 
Let $\mD{\mT_{\bv}} := \text{Int}(\bigcup_{K \in \mT_{\bv}}
\overline{K})$ be the open domain corresponding to
$\mT_{\bv}$, see the right figure in Fig.~\ref{fig_node}.
It is noted that the patch $\mT_{\bv}$ and its domain 
$\mD{\mT_{\bv}}$ play an important role in our
\revise{tracking}{locating} algorithm.
Let $B(\bm{z}, r)$ be the disk (ball) centered at the position
$\bm{z}$ with the radius $r$, and we let $\partial B(\bm{z}, r)$ be
the sphere of $B(\bm{z}, r)$.
For any interior node $\bv \in \MNhi$, there holds
$B(\bv, w^*) \subset \mD{\mT_{\bv}}$ for any $w^* < w$, while for any
node $\bv \in \MNhb$, we have that $(B(\bm{v}, w^*) \cap
\Omega) \subset \mD{\mT_{\bv}}$.
For any $K \in \MTh$, we let $\mN_K := \{ \bm{w} \in \MNh: \ \bm{w}
\in \partial K \}$ be the set of all vertices of $K$.

\begin{figure}[htp]
  \centering
  \begin{minipage}[t]{0.43\textwidth}
    \begin{center}
      \begin{tikzpicture}[scale=1]
        \draw[thick, dashed] (0, 2) -- (0, 0) -- (3, 0);
        \draw[thick] (0, 0) -- (1, 0) -- (0, 1) -- (0, 0);
        \draw[thick] (0, 1) -- (0.85, 1.15) -- (1, 0);
        \draw[thick] (0, 0) -- (0.85, 1.15) -- (1.8, 0.7) -- (1, 0);
        \draw[fill=red, red] (0, 0) circle [radius=0.06];
        \draw[fill=red, red] (1, 0) circle [radius=0.06];
        \draw[fill=red, red] (0, 1) circle [radius=0.06];
        \draw[fill=blue, blue] (0.42, 0.57) circle [radius=0.06];
        \draw[fill=blue, blue] (0.85, 1.15) circle [radius=0.06];
        \draw[fill=blue, blue] (1.8, 0.7) circle   [radius=0.06];
      \end{tikzpicture}
    \end{center}
  \end{minipage}\begin{minipage}[t]{0.42\textwidth}
    \begin{center}
      \begin{tikzpicture}[scale=1.05]
        \foreach \x in {30, 90, 150, 210, 270, 330}
        \draw[thick, dashed] (0, 0) -- ({cos(\x)}, {sin(\x)});
        \foreach \x in {30, 90, 150, 210, 270, 330}
        \draw[thick] ({cos(\x)}, {sin(\x)}) -- ({cos(\x + 60)}, {sin(\x
        + 60)});
        \draw[blue, fill=blue] (0, 0) circle [radius=0.06];
        \node[below right] at (0, -0.1) {$\bv$};
      \end{tikzpicture}
    \end{center}
  \end{minipage}
  \caption{red: nodes in $\MNhb$, blue: nodes in $\MNhi$ (left)
  / the patch domain $\mD{\mT_{\bv}}$ (right)}
  \label{fig_node}
\end{figure}

Let $\MEh$ be the set of all edges of the partition $\MTh$, and also
$\MEh$ is decomposed into $\MEh = \MEhi + \MEhb$, where
$\MEhi$ and $\MEhb$ are sets of all interior and boundary edges,
respectively.
For any $e \in \MEh$, we let $\mN_e := \{ \bv \in \MNh: \
\bv \in \overline{e}\} $ be the set of its vertices, 
and let
$\mT_e := \{K \in \MTh: \ e
\subset \partial K \}$ be the set of elements sharing the common edge
$e$. 
Particularly in two dimensions, the set $\mT_e$ has two elements for
$e \in \MEhi$ while $\mT_e$ only has one element for $e \in \MEhb$.
For any $\mT_e$, we define the corresponding domain 
$\mD{\mT_e} := \text{Int}(\bigcup_{K \in \mT_e} \overline{K})$.
For any node $\bv \in \MNh$, we define $\mE_{\bv} := \{e \in \MEh: \
\bv \in \mN_{e}\}$ as the set of all edges sharing a common
vertex $\bv$.

In three dimensions, we define $\MFh$ as the set of all
two-dimensional faces in $\MTh$, and
$\MFh$ is still decomposed into $\MFh = \MFhi + \MFhb$, where $\MFhi$
and $\MFhb$ consist of all interior faces and the faces lying on the
boundary $\partial \Omega$, respectively. 
For any $f \in \MFh$,
we let $\mN_f := \{ \bv \in \MNh: \  \bv \in
\overline{f}\}$ be the set of vertices, and let $\mT_f := \{K \in
\MTh: \ f \subset \partial K \}$ be the collection of all elements
sharing the common face $f$. 
Similarly, $\mT_f$ has two/one elements for $f \in \MFhi$/$f \in
\MFhb$.

In our algorithm, we are required to construct an auxiliary regular
Cartesian grid that covers the whole computational
domain with a prescribed resolution. 
For this goal, we select a simple rectangular (cuboid) domain such
that $\Omega \subset \Omega^*$, where $\Omega^*$ can be described as
\begin{displaymath}
  \Omega^* = \begin{cases} (x_{\min}, x_{\max}) \times (y_{\min},
    y_{\max}), & d = 2, \\ 
    (x_{\min}, x_{\max}) \times (y_{\min}, y_{\max}) \times (z_{\min},
    z_{\max}), & d = 3. 
  \end{cases}
\end{displaymath}
Let $\MCs$ be the Cartesian grid over $\Omega^*$, and for simplicity,
we assume that $\MCs$ has the same grid spacing $s$ in all directions,
i.e. 
\begin{displaymath}
  \begin{aligned}
    s & = \frac{x_{\max} - x_{\min}}{n_x} = 
    \frac{y_{\max} - y_{\min}}{n_y}, && d = 2,  \\
    s &= \frac{x_{\max} - x_{\min}}{n_x}=
    \frac{y_{\max} - y_{\min}}{n_y} = \frac{z_{\max} - z_{\min}}{ n_z}
    , && d = 3,  \\
  \end{aligned}
\end{displaymath}
where $n_{x}(n_y, n_z)$ denote the numbers of elements in different
directions. 
We define $\MMs$ as the set of all nodes in $\MCs$.
For any $T \in \MCs$, we let $\mM_T := \{ \bvs \in \MMs: \ \bvs \in
\partial T \}$ consist of all vertices of $T$, and $T$ can be
described by two vertices $\bm{x}_{T, 1} = \{x_{T, 1}^j\}_{j = 1}^d,
\bm{x}_{T, 2} = \{ x_{T, 2}^j \}^d$ such that $\bm{x}_{T, 1} <
\bm{x}_{T, 2}$ with $T = \Pi_{j = 1}^d (x_{T, 1}^j, x_{T, 2}^j)$.
Throughout this paper, for two vectors $\bm{a}, \bm{b}$, the
inequality $\bm{a} <( \leq) \bm{b}$ is understood in a component-wise
manner.

Given any position $\bm{q} \in \Omega$, the index of the
host cell in $\MCs$ containing $\bm{q}$ can be rapidly determined
by its coordinates and the grid spacing $s$, which reads
\begin{equation}
  \begin{aligned}
    (\lfloor (q_x - x_{\min})/s \rfloor, \lfloor (q_y - y_{\min})/s
    \rfloor), \quad & \bm{q} = (q_x, q_y) \in \mb{R}^2, \\
    (\lfloor (q_x - x_{\min})/s \rfloor, \lfloor (q_y - y_{\min})/s
    \rfloor, \lfloor (q_z - z_{\min})/s \rfloor), \quad & \bm{q} =
    (q_x, q_y, q_z) \in \mb{R}^3,
  \end{aligned}
  \label{eq_Tindex}
\end{equation}
We note that only the Cartesian cells that have
intersection with $\Omega$ can be candidates of the host cell for
any $\bm{q} \in \Omega$.
These cells are called as active cells, and we denote by $\MCsc :=
\{T \in \MCs: \ |T \cap \Omega| > 0\}$ the set of all active cells. 

Finally, we define a distance function $d(\cdot, \cdot)$ such that
$d(\bm{w}_1, \bm{w}_2) = | \bm{w}_1 - \bm{w}_2|$ for any two points
$\bm{w}_1, \bm{w}_2 \in \mb{R}^d$. 
In addition, we let $d(\bm{w}, L)$ be the shortest distance
between $\bm{w}$ and $L$ for any point $\bm{w} \in \mb{R}^d$ and any
line $L \subset \mb{R}^d$.

\section{Particle \revise{Tracking}{Locating} based on Patch Searching} 
\label{sec_method}

\subsection{\revise{Tracking}{Locating} in two dimensions}
\label{subsec_track2d}
In this subsection, we present the algorithm in two dimensions.
Given any $\bm{p} \in \Omega$,
its host element $K \in \MTh$ is determined by two steps.
First we find out a node $\bv \in \MNh$ such that $\bm{p}$ is included
in the patch $\mD{\mT_{\bv}}$, and the second step is to seek $K$ in
$\mT_{\bv}$.

The node in the first step can be rapidly searched using the
background Cartesian grid. 
Here, the grid spacing $s$ is required to satisfy the
condition that $s \leq \frac{w^* \sin \alpha}{ \sqrt{2}(1 + \sin
\alpha)}$ with $w^* < w$,
where $\alpha$ comes from the minimum condition \eqref{eq_miniangle}.
The following lemma indicates every active cell can be associated with
a node $\bv$.

\begin{lemma}
  Under the condition $s \leq \frac{w^* \sin \alpha}{ \sqrt{2}(1 + \sin
  \alpha)}$, for any $T \in \MCsc$, there exists a node $\bv \in \MNh$
  such that $(T \cap \Omega) \subset \mD{\mT_{\bv}}$.
  \label{le_patch2d}
\end{lemma}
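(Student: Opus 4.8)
The plan is to fix an arbitrary point $\bm{p}_0 \in T \cap \Omega$ and to exhibit a single node $\bv \in \MNh$ with the property that \emph{every} element $K' \in \MTh$ meeting $B(\bm{p}_0, r) \cap \Omega$ contains $\bv$ as a vertex, where $r := \sqrt 2\, s$ is the diameter of the cell $T$. This suffices: since $\bm{p}_0$ lies in the open cell $T$ we have $|\bm{p} - \bm{p}_0| < r$ for every $\bm{p} \in T$, hence $T \cap \Omega \subseteq B(\bm{p}_0, r) \cap \Omega$, and every element whose closure contains a point of $T \cap \Omega$ is then one of the elements above, so it contains $\bv$; this is exactly the assertion $T \cap \Omega \subseteq \mD{\mT_{\bv}}$. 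The hypothesis on $s$ enters only through the single inequality $\tfrac{r}{\sin\alpha} + r \le w^*$, which is a rewriting of $s \le \tfrac{w^* \sin\alpha}{\sqrt 2(1+\sin\alpha)}$.

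Two elementary facts will drive the argument. First, the patch inclusion recalled before the lemma, read contrapositively: if $\bv \notin \mN_{K'}$ then $B(\bv, w^*) \cap \Omega$ is disjoint from the (open) element $K'$, so $d\big(\bv,\, \overline{K'} \cap \overline{\Omega}\,\big) \ge w^*$; in particular, any element not containing $\bv$ that meets $B(\bm{p}_0, r) \cap \Omega$ forces $d(\bm{p}_0, \bv) > w^* - r = \tfrac{r}{\sin\alpha}$. Second, a local consequence of the minimum angle condition \eqref{eq_miniangle}: if $e$ is an edge of an element $K$, $K'$ the element on the other side of $e$, and $g$ one of the two edges of $K'$ issuing from an endpoint $\bv^*$ of $e$, then $d(\bm{p}, g) \ge (\sin\alpha)\, d(\bm{p}, \bv^*)$ for every $\bm{p} \in K$, because the angle at $\bv^*$ between the direction $\bm{p} - \bv^*$ (which lies in the angular sector of $K$ at $\bv^*$) and the edge $g$ (which lies in that of $K'$) is at least the interior angle of $K'$ at $\bv^*$, hence at least $\alpha$.

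Let $K$ be the element with $\bm{p}_0 \in K$ and split according to the position of $\bm{p}_0$ inside $K$. If $d(\bm{p}_0, \bv) \le \tfrac{r}{\sin\alpha}$ for some vertex $\bv$ of $K$, then for every $\bm{p} \in B(\bm{p}_0, r)$ one has $d(\bv, \bm{p}) < \tfrac{r}{\sin\alpha} + r = w^*$, so $B(\bm{p}_0, r) \cap \Omega \subseteq B(\bv, w^*) \cap \Omega \subseteq \mD{\mT_{\bv}}$ and $\bv$ is the desired node. Otherwise every vertex of $K$ is farther than $\tfrac{r}{\sin\alpha} \ge r$ from $\bm{p}_0$; then, if also $d(\bm{p}_0, \partial K) > r$, no element other than $K$ meets $B(\bm{p}_0, r)$ and any vertex of $K$ works, while if $d(\bm{p}_0, \partial K) \le r$ this distance must be attained in the relative interior of some edge $e$ of $K$ (it cannot be attained at a vertex, which is too far), and the candidate node is an endpoint of $e$. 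Here the two facts above, combined with the width bound $w_{K'} \ge w > w^*$ for the neighbours $K'$ of $K$, are used to confine the elements meeting $B(\bm{p}_0, r)$ to $K$, to the element $K_e$ sharing $e$, and to elements reached from $K_e$ across an edge still containing the chosen endpoint — anything reached across the edge of $K_e$ \emph{opposite} that endpoint sits at distance larger than $r$ from $\bm{p}_0$ — so all elements meeting $B(\bm{p}_0, r)$ share that endpoint.

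The steps I expect to be the most delicate, and which I would write out in full, are: (i) the passage to distances, including the harmless open/closed distinctions and the boundary variant in which only $B(\bv, w^*) \cap \Omega \subseteq \mD{\mT_{\bv}}$ is available; and, the real obstacle, (ii) the configuration in the last case where $\bm{p}_0$ is close to two edges of $K$ meeting at a vertex whose angle is near the minimum $\alpha$, so that $B(\bm{p}_0, r)$ straddles both edges at once. This is precisely the situation in which the spacing $\tfrac{w^*\sin\alpha}{\sqrt2(1+\sin\alpha)}$ is sharp, and closing it requires checking that, under exactly this spacing, every element $B(\bm{p}_0, r)$ can reach still contains the common vertex of those two edges.
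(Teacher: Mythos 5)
There is a genuine gap, and it sits exactly at the step you flag as ``the real obstacle'': your reduction of the lemma to the statement \emph{``every element meeting $B(\bm{p}_0, r) \cap \Omega$ contains $\bv$''}, with $\bm{p}_0$ an \emph{arbitrary} point of $T$ and $r = \sqrt{2}\,s = \mathrm{diam}(T)$, is strictly stronger than the lemma and is in fact false under the stated hypothesis on $s$. The condition permits $r$ up to $\frac{w^* \sin\alpha}{1+\sin\alpha}$, and for a near-equilateral element $K$ (so $\alpha$ close to $\pi/3$, $w_K = 3\rho_K$, $w^*$ close to $w$) this allows $r \approx 1.39\,\rho_K > \rho_K$. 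Taking $\bm{p}_0$ near the incenter of $K$, the ball $B(\bm{p}_0, r)$ then crosses \emph{all three} edges of $K$ while every vertex of $K$ is at distance $2\rho_K > r/\sin\alpha$, so you land in your last case; the ball meets $K$ and all three of its edge-neighbours, and these four elements share no common vertex. No choice of $\bv$ can close the argument there. The underlying defect is that your dichotomy threshold $r/\sin\alpha$ on the distances $d(\bm{p}_0, \bv)$ controls, via your second fact, the distance from $\bm{p}_0$ to the \emph{other edges of the neighbours} of $K$, but it does not control the distance from $\bm{p}_0$ to the other edges of $K$ itself: a point can be far from all three vertices yet within $r$ of all three edges. (Your two-edge straddle analysis, by contrast, does go through: both non-shared edges of $K_e$ emanate from an endpoint of $e$ and are at distance $> r$ by your second fact, so the ball cannot leave $K \cup \overline{e} \cup K_e \cup \overline{e_1} \cup K_{e_1}$ in that configuration.)

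The paper avoids this entirely by never replacing $T$ with a ball about an arbitrary interior point. It splits $\MCsc$ into cells not meeting any $\overline{e}$ (trivial case) and cells with $\overline{T} \cap \overline{e} \neq \varnothing$; in the latter case it picks a point $\bm{q} \in \overline{T} \cap \overline{e}$ \emph{on the mesh edge} and covers $T$ by $B(\bm{q}, \sqrt{2}\,s)$ --- same radius as yours, but anchored on $e$. With the threshold $d(\bm{q}, \bv_i) \lessgtr \frac{w^*}{1+\sin\alpha}$, the near case gives $T \subset B(\bv_i, w^*) \subset \overline{\mD{\mT_{\bv_i}}}$ because $\frac{w^*}{1+\sin\alpha} + \frac{w^*\sin\alpha}{1+\sin\alpha} = w^*$, and the far case invokes Lemma \ref{le_app_disk}: for a point on $e$ at distance $> \tau$ from both endpoints, the two base angles being $\geq \alpha$ force the half-disk $B^+(\bm{q}, \tau\sin\alpha)$ into each element of $\mT_e$, so $B(\bm{q}, \tau\sin\alpha) \cap \Omega \subset \mD{\mT_e} \subset \mD{\mT_{\bv_0}}$. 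Anchoring on the edge is what rules out the multi-edge straddle: the other edges of $K$ emanate from the endpoints of $e$, hence are at distance $\geq \tau\sin\alpha$ from $\bm{q}$. To salvage your route you would either have to impose roughly half the stated grid spacing, or, in your final case, abandon the enlarged ball and argue directly about $T$ by moving the centre onto the intersected edge --- which is precisely the paper's proof.
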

\begin{proof}
  According to whether the cell intersects an edge,  all active cells
  can be classified into two types. 
  We define 
  \begin{equation}
    \begin{aligned}
      \MCscI & := \{T \in \MCsc: \ \text{$\overline{T}$ does not
      intersect $\overline{e}$ for any edge $e \in \MEh$}\}, \\
      \MCscB & := \MCsc \backslash \MCscI. 
    \end{aligned}
    \label{eq_ccell}
  \end{equation}
  From the definition \eqref{eq_ccell}, any $T \in \MCscI$ will be
  entirely contained in an element $K \in \MTh$, see
  Fig.~\ref{fig_patch2d}. 
  Then, any vertex $\bv \in \mN_K$ can be associated to $T$.

  We further turn to the case that the cell $\overline{T}$ intersects 
  $\overline{e}$ at least for an edge 
  $e \in \MEh$.
  Let $\mN_{e} := \{\bv_0, \bv_1\}$, and 
  we let $\bm{q}$ be any point in the intersection between
  $\overline{T}$ and $\overline{e}$.
  If there exists $i$ such that $d(\bm{q}, \bv_i) \leq
  \frac{w^*}{1 + \sin \alpha}$, together with the diameter of $T$, we
  know that the distance between any vertex of $T$ and $\bv_i$ can be
  bounded by
  \begin{displaymath}
    d(\bvs, \bv_i) \leq d(\bm{q}, \bv_i) + \sqrt{2} s \leq
    \frac{w^*}{1 + \sin \alpha} + \frac{w^* \sin \alpha}{1 + \sin
    \alpha} = w^*, \quad \forall \bvs \in \mM_T,
  \end{displaymath}
  which implies $T \subset B(\bv_i, w^*)$ and thus $(T \cap \Omega)
  \subset (B(\bv_i, w^*) \cap \Omega) \subset \mD{\mT_{\bv_i}}$.
  It remains to prove for the case that
  \begin{equation}
    d(\bm{q}, \bv_i) > \frac{w^*}{1 + \sin \alpha}, \quad i = 0, 1.
    \label{eq_2d_1bvi}
  \end{equation}
  By Lemma \ref{le_app_disk}, we know that for $e \in \MEhi$,
  there holds $B(\bm{q}, \frac{w^* \sin
  \alpha}{1 + \sin \alpha}) \subset \mD{\mT_e}$.
  For any $\bvs \in \mM_T$, it is evident that $d(\bvs, \bm{q}) \leq
  \sqrt{2} s \leq  \frac{w^* \sin \alpha}{1 + \sin \alpha}$,
  which directly brings us that $T \subset B(\bm{q}, \frac{w^* \sin
  \alpha}{1 + \sin \alpha}) \subset \mD{\mT_e}$. 
  Clearly, $\mD{\mT_e} \subset \mD{\mT_{\bv_0}}$, and thus 
  $T$ can be associated with $\bv_0$.
  For $e \in \MEhb$, it is similar to find that $(B(\bm{q},
  \frac{w^* \sin \alpha}{1 + \sin \alpha}) \cap \Omega) \subset
  \mD{\mT_e}$. By Lemma \ref{le_app_disk}, $T$ can still be associated
  with the vertex $\bv_0$.
  This completes the proof.

  \begin{figure}[htp]
    \centering
    \begin{minipage}[t]{0.43\textwidth}
      \begin{center}
        \begin{tikzpicture}[scale=1.3]
          \footnotesize
          \draw[thick] (0, 0) -- (1, 0.3);
          \draw[thick] (0, 0) -- (0.4, 0.9);
          \draw[thick] (0, 0) -- (-0.6, 0.8);
          \draw[thick] (0.4, 0.9) -- (-0.6, 0.8) -- (-1, 0) -- (-0.65,
          -0.85) -- (0.65, -0.75) -- (1, 0.3) -- (0.4, 0.9);
          \draw[thick] (0, 0) -- (-1, 0);
          \draw[thick] (0, 0) -- (-0.65, -0.85);
          \draw[thick] (0, 0) -- (0.65, -0.75);
          \draw[thick, red] (-0.15, 0.5) rectangle (0.15, 0.8);
          \node[below right] at (0.07, 0.07) {$\bv$};
        \end{tikzpicture}
      \end{center}
    \end{minipage} \begin{minipage}[t]{0.43\textwidth}
      \begin{center}
        \begin{tikzpicture}[scale = 1.3]
          \footnotesize
          \draw[thick] (0, 0) -- (1.2, 0);
          \draw[thick] (0, 0) -- (0.8, -0.9);
          \draw[thick] (0, 0) -- (0.7, 0.9);
          \draw[thick] (0.8, -0.9) -- (1.2, 0) -- (0.7, 0.9);
          \draw[thick, red] (0.2, -0.15) rectangle (0.5, 0.15);
          \node[left] at (0, 0) {$\bv_0$};
          \node[right] at (1.2, 0) {$\bv_1$};
          \draw[dashed] (0.49, 0.63) arc [start angle = 60, end angle =
          -50, radius = 0.81];
          \node at (1.2, -0.5) {{\tiny $B(\bv_0, w^*)$}};
        \end{tikzpicture}
      \end{center}
    \end{minipage}

    \begin{minipage}[t]{0.43\textwidth}
      \begin{center}
        \begin{tikzpicture}[scale = 1.3]
          \footnotesize
          \draw[thick] (0, 0) -- (1.2, 0);
          \draw[thick] (0, 0) -- (0.8, -0.9);
          \draw[thick] (0, 0) -- (0.7, 0.9);
          \draw[thick] (0.8, -0.9) -- (1.2, 0) -- (0.7, 0.9);
          \draw[thick, red] (0.58, -0.15) rectangle (0.88, 0.15);
          \node[left] at (0, 0) {$\bv_0$};
          \node[right] at (1.2, 0) {$\bv_1$};
          \draw[dashed] (0.6, 0) circle [radius = 0.399];
          \node at (1.2, -0.56) {{\tiny$B(\bm{q}, \frac{w^* \sin
          \alpha}{1 + \sin \alpha})$}};
        \end{tikzpicture}
      \end{center}
    \end{minipage} \begin{minipage}[t]{0.43\textwidth}
      \begin{center}
        \begin{tikzpicture}[scale=1.3]
          \footnotesize
          \draw[thick, white] (0.5, 1.5) circle [radius=0.2];
          \draw[dashed] (-0.2, 0) rectangle (1.5, 1.2);
          \node at (1.25, 0.8) {$\Omega$};
          \draw[thick] (0, 0) -- (0.7, 0.9);
          \draw[thick] (1.2, 0) -- (0.7, 0.9);
          \node[left] at ($(0, -0.15) + (0.17, 0.35)$) {$\bv_0$};
          \node[right] at ($(1.2, -0.15) + (-0.1, 0.35)$) {$\bv_1$};
          \draw[thick, red] (0.58, -0.15) rectangle (0.88, 0.15);
          \draw[dashed] (0.6, 0) circle [radius = 0.399];
          \draw[thick, white, fill=white] (0, 0) -- (0.8, -0.9) -- (1.2, 0);
          \draw[thick] (0, 0) -- (1.2, 0);
          \node at (0.7, -0.22) {{\tiny$B(\bm{q}, \frac{w^* \sin
          \alpha}{1 + \sin \alpha})$}};
        \end{tikzpicture}
      \end{center}
    \end{minipage}
    \caption{The Cartesian cells in $\MCscI$ and $\MCscB$.}
    \label{fig_patch2d}
  \end{figure}
\end{proof}
Lemma \ref{le_patch2d} indicates that there exists a relation mapping
$\varphi: \MCsc \rightarrow \MNh$ such that $(T \cap \Omega) \subset
\mD{\mT_{\varphi(T)}}$ for any active $T \in \MCsc$.
The mapping $\varphi$ only depends on $\MTh$ and $\MCs$, which 
can be computed and stored before performing the
locating algorithm.
From $\varphi$, given any point $\bm{p} \in \Omega$, the associated
vertex $\bv$ can be fast localized using the Cartesian grid by $\bv
= \varphi(T)$ with $\bm{p} \in T$.

The next step is to seek the host element $K \in \mT_{\bv}$ for the
given $\bm{p} \in \mD{\mT_{\bv}}$, and we will show that 
this can be converted into a searching problem.
For $\bv \in \MNh$, we introduce a local coordinate system 
by letting $\bv$ be the origin, see Fig.~\ref{fig_2dlocalizepatch}.
Let $\mE_{\bv} = \{e_{\bv, 1}, e_{\bv, 2}, \ldots, e_{\bv, n}\}$ and
we let $\bm{v}_{\bv, i}$ be the unit vector along the edge $e_{\bv,
i}$ with the starting point $\bv$.
Let $\bm{\varepsilon} := (0, 1)^T$ be the unit vector on the $y$-axis,
and we let $\theta_{\bv, i}$ be the angle of rotating
$\bm{\varepsilon}$ to $\bm{v}_{\bv, i}$ in a clockwise direction. 
More details on computing such angles are given in Remark
\ref{re_angle}.
We further arrange the vectors $\{\bm{v}_{\bv, i} \}_{i=1}^n$ such
that $\{ \theta_{\bv, i} \}_{i=1}^n$ are sorted in ascending order. 
Because we have known that $\bm{p} \in \mD{\mT_{\bv}}$, seeking the
host element is equivalent to determining an index $i \in [1, n]$
such that $\bm{p}$ is included in the fan-shaped domain formed by
vectors $\bm{v}_{\bv, i}$ and $\bm{v}_{\bv, i+1}$, where $\bm{v}_{\bv,
n+1} := \bm{v}_{\bv, 1}$, see Fig.~\ref{fig_2dlocalizepatch}.
Let $\bm{v}$ be the vector connecting $\bv$ to $\bm{p}$, and we let
$\theta$ be the angle of clockwise rotating $\bm{\varepsilon}$ to the
direction of $\bm{v}$. 
Then, it suffices to search an index $i$ such that $\theta \in
(\theta_{\bv, i}, \theta_{\bv, i+1})$, where $\theta_{\bv, 0} = 0,
\theta_{\bv, n+1} := 2\pi$, which can be easily achieved by
the binary search algorithm.
From the minimum angle condition \eqref{eq_miniangle}, there holds $n
\leq \frac{2\pi}{\alpha}$, and the searching algorithm requires
$O(\log(\frac{2\pi}{\alpha}))$ comparisons. 
For any $\bv \in \MNh$, all angles $\{ \theta_{\bv, i} \}_{i = 1}^{n}$
can be prepared before performing the algorithm. 
For the node $\bv \in \MNhb$ lying on the boundary
$\partial \Omega$, there will be a fan-shaped
domain that is outside the domain $\Omega$. 
This domain can be simply marked with a flag $-1$ and the locating 
procedure is the same with interior nodes.  

\begin{figure}[htp]
  \centering
  \footnotesize
  \begin{minipage}[t]{0.43\textwidth}
    \begin{tikzpicture}[scale=1.35]
      \draw[thick, white, ->] (0, -1.3) -- (0, 1.3);
      \draw[thick, white] (-1.3, 0) -- (1.3, 0);
      \draw[thick] (0, 0) -- (1, 0.3);
      \draw[thick] (0, 0) -- (0.4, 0.9);
      \draw[thick] (0, 0) -- (-0.6, 0.8);
      \draw[thick] (0, 0) -- (-1, -0.15);
      \draw[thick] (0, 0) -- (-0.65, -0.85);
      \draw[thick] (0, 0) -- (0.65, -0.75);
      \draw[fill=red, red] (0.6, -0.3) circle [radius = 0.035];
      \draw[thick] (1, 0.3) -- (0.4, 0.9)-- (-0.6, 0.8)-- (-1,
      -0.15)-- (-0.65, -0.85)-- (0.65, -0.75) -- (1, 0.3);
      \node[below] at (0, -0.1) {$\bv$};
      \draw[black, fill=black] (0, 0) circle [radius=0.035];
      \node[right] at (0.4, 0.9) {$e_{\bv, 1}$};
      \node[right] at (1.0, 0.3) {$e_{\bv, 2}$};
      \node[right] at (0.65, -0.75) {$e_{\bv, 3}$};
      \node[left] at (-0.65, -0.85) {$e_{\bv, 4}$};
      \node[left] at (-1, -0.15) {$e_{\bv, 5}$};
      \node[left] at (-0.6, 0.8) {$e_{\bv, 6}$};
    \end{tikzpicture}
  \end{minipage}  \begin{minipage}[t]{0.43\textwidth}
    \begin{tikzpicture}[scale=1.35]
      \draw[thick, dashed, ->] (0, -1.3) -- (0, 1.3);
      \node[right] at (0, 1.2) {$\bm{\varepsilon}$};
      \draw[thick, white] (-1.3, 0) -- (1.3, 0);
      \draw[dashed, red] (0, 0) -- (0.6, -0.3);
      \draw[red] (0, 0.2) arc [start angle=90, end angle=-26,
      radius=0.2];
      \node[red] at (0.25, 0.25) {$\theta$};
      \draw[black] (0, 0.46) arc [start angle=90, end angle=18,
      radius=0.46];
      \node at (0.47, 0.47) {$\theta_{\bv, 2}$};
      \draw[black] (0, 0.75) arc [start angle=90, end angle=-172,
      radius=0.75];
      \node at (0.3, -0.85) {$\theta_{\bv, 5}$};
      \draw[thick, ->] (0, 0) -- (1, 0.3);
      \draw[thick, ->] (0, 0) -- (0.4, 0.9);
      \draw[thick, ->] (0, 0) -- (-0.6, 0.8);
      \draw[thick, ->] (0, 0) -- (-1, -0.15);
      \draw[thick, ->] (0, 0) -- (-0.65, -0.85);
      \draw[thick, ->] (0, 0) -- (0.65, -0.75);
      \draw[fill=red, red] (0.6, -0.3) circle [radius = 0.035];
      \draw[black, fill=black] (0, 0) circle [radius=0.035];
      \node[right] at (0.4, 0.9)    {$\bm{v}_{\bv, 1}$};
      \node[right] at (1.0, 0.3)    {$\bm{v}_{\bv, 2}$};
      \node[right] at (0.65, -0.75) {$\bm{v}_{\bv, 3}$};
      \node[left] at (-0.65, -0.85) {$\bm{v}_{\bv, 4}$};
      \node[left] at (-1, -0.15)    {$\bm{v}_{\bv, 5}$};
      \node[left] at (-0.6, 0.8)    {$\bm{v}_{\bv, 6}$};
    \end{tikzpicture}
  \end{minipage} \vspace{10pt}\begin{minipage}[t]{0.43\textwidth}
    \begin{tikzpicture}[scale=1.35]
      \draw[thick, white, ->] (0, -1.3) -- (0, 1.3);
      \draw[dashed] (-1.6, 0) rectangle (1.6, 1.8);
      \node at (0, -0.5) {flag: -1};
      \node[below left] at (1.2, 1.5) {$\Omega$};
      \node[below] at (0.6, 0) {$\partial \Omega$};
      \draw[thick] (-1, 0) -- (0, 0) -- (1, 0); 
      \draw[thick] (0, 0) -- (0.6, 0.8);
      \draw[thick] (0, 0) -- (-0.65, 0.82);
      \draw[thick] (-1, 0) -- (-0.65, 0.82) -- (0.6, 0.8) -- (1, 0);
      \node[below left] at (0, 0) {$\bv$};
      \draw[thick, fill=black] (0, 0) circle [radius=0.035];
      \node[right] at (0.6, 0.8)   {$e_{\bv, 1}$};
      \node[right] at (1, -0.15)   {$e_{\bv, 2}$};
      \node[left] at (-1, -0.15)   {$e_{\bv, 3}$};
      \node[left] at (-0.65, 0.82) {$e_{\bv, 4}$};
    \end{tikzpicture}
  \end{minipage} \begin{minipage}[t]{0.43\textwidth}
    \begin{tikzpicture}[scale=1.35]
      \draw[thick, white, ->] (0, -1.3) -- (0, 1.3);
      \draw[thick, dashed, ->] (0, -0.8) -- (0, 1.2);
      \draw[red] (0, 0.25) arc [start angle=90, end angle=23,
      radius=0.25];
      \node[red] at (0.30, 0.25) {$\theta$};
      \draw[] (0, 0.4) arc [start angle = 90, end angle = 55, radius
      = 0.4];
      \node[] at (0.22, 0.65) {$\theta_{\bv, 1}$};
      \draw[] (0, 0.75) arc [start angle = 90, end angle = -180, radius
      = 0.75];
      \node[] at (0.65, -0.7) {$\theta_{\bv, 3}$};
      \draw[red, dashed] (0, 0) -- (0.8, 0.3);
      \draw[thick, <->] (-1, 0) -- (0, 0) -- (1, 0);
      \draw[thick, fill=black] (0, 0) circle [radius=0.035];
      \draw[thick, ->] (0, 0) -- (0.6, 0.8);
      \draw[thick, ->] (0, 0) -- (-0.65, 0.82);
      \node[below left] at (0, 0) {$\bv$};
      \node at (0, -1) {flag: -1};
      \node[right] at (0.6, 0.8)   {$\bm{v}_{\bv, 1}$};
      \node[right] at (1, 0)       {$\bm{v}_{\bv, 2}$};
      \node[left] at (-1, 0)       {$\bm{v}_{\bv, 3}$};
      \node[left] at (-0.65, 0.82) {$\bm{v}_{\bv, 4}$};
      \draw[fill=red, red] (0.8, 0.3) circle [radius = 0.035];
    \end{tikzpicture}
  \end{minipage}
  \caption{localizing the point in the patch.}
  \label{fig_2dlocalizepatch}
\end{figure}
Given a point $\bm{p} \in \Omega$, our algorithm is summarized as
below:
\begin{equation}
  \bm{p} \xrightarrow{\step{1}} T \xrightarrow{\step{2}} \bv
  \xrightarrow{\step{3}} K.
  \label{eq_track2d}
\end{equation}
In \step{1} - \step{2}, the host cell $T$ can be easily found by
\eqref{eq_Tindex}, and then $\bv = \varphi(T)$.
In \step{3},
the host element $K$ is determined by computing the angle $\theta$
from  $\overrightarrow{\bv \bm{p}}$ and searching $\theta$ in
$\{\theta_{\bv, i} \}_{i =1}^n$.
The computational cost consists of three parts: locating on the
Cartesian grid, computing the pseudo angle from \eqref{eq_pangle} and
$O(\log(\frac{2\pi}{ \alpha}))$ comparisons.
Before performing the algorithm, our method needs to prepare
some data in the initializing stage, where 
the main step is to establish the relation mapping $\varphi$.
The computer implementation is detailed in next section.

\revise{}{
\begin{remark}
  In the local coordinate system with $\bv$ as the origin (see
  Fig.~\ref{fig_2dlocalizepatch}), computing the angles $\theta,
  \theta_{\bv, i} $ typically requires calling the inverse
  trigonometric
  function as $\arctan(\cdot)$ to vectors $\bm{v}$, $\bm{v}_{\bv,
  i}$.
  Then, seeking the host element converts into searching $\theta$ in
  $\{\theta_{\bv, i} \}$. 
  To have a better computational efficiency, we can use a simple
  function $\hat{\Theta}$, which is strictly positively related to the
  angle, to avoid precisely computing angles. 
  An example of such functions reads: for a given vector $\bm{w}$
  starting from the origin $(0, 0)^T$ to the point $(x, y)^T$, we
  define 
  \begin{equation}
    \hat{\Theta}(x, y) = \frac{\sign(x) x}{\sign(y)x + \sign(x)y} 
    - \sign(x)(\sign(y) + 1), \quad 
    \sign(z) = \begin{cases}
      1, & z \geq 0, \\
      -1, & z < 0, \\
    \end{cases}
    \label{eq_pangle}
  \end{equation}
  Using \eqref{eq_pangle}, we calculate $\hat{\theta}$ and
  $\{\hat{\theta}_{\bv, i}\}$ from vectors $\bm{v}$ and $\{\bm{v}_{\bv,
  i}\}$, respectively.
  It can be readily checked that localizing $\theta$ in
  $\{\theta_{\bv, i}\}$ is rigorously equivalent to localizing
  $\hat{\theta}$ in $\{\hat{\theta}_{\bv, i}\}$.
  Consequently, we can apply the function \eqref{eq_pangle} in the
  algorithm instead of precisely computing angles.
  From our tests, the function \eqref{eq_pangle} is numerically
  observed to be much faster than calling the function
  $\arctan(\cdot)$ for computing angles. 

  \label{re_angle}
\end{remark}
}

\subsection{\revise{Tracking}{Locating} in three dimensions}
\label{subsec_track3d}
In this section, we present the method in three dimensions, 
following a similar idea of two dimensions.
Given a point $\bm{p} \in \Omega$,  we first search a node $\bv \in
\MNh$ and the host element can be further found in the patch set
$\mT_{\bv}$.

The first step is also implemented with the background Cartesian
grid $\MCs$ with a specified spacing.
In three dimensions, the condition reads
\begin{equation}
  s \leq  \frac{2 w^* \sin \alpha \sin \frac{\alpha}{2}}{ \sqrt{3}( 1 +
  \sin \alpha)( 1 + \sin \frac{\alpha}{2})}, \quad w^* < \min(w,
  \frac{l_{\min}}{2}),
  \label{eq_s3d}
\end{equation}
where $l_{\min} := \min_{e \in \MEh} |e|$ denotes the length of the
shortest edge in the mesh $\MTh$.
Then, we demonstrate a similar result to Lemma \ref{le_patch2d}.

\begin{lemma}
  Under the condition \eqref{eq_s3d}, for any active $T \in \MCsc$,
  there exists a node $\bv \in \MNh$ such that $(T \cap \Omega)
  \subset \mD{\mT_{\bv}}$.
  \label{le_patch3d}
\end{lemma}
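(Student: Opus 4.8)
The plan is to follow the same dichotomy on active cells as in the proof of Lemma~\ref{le_patch2d}, but now tracking an extra layer of geometric structure: a Cartesian cell may fail to meet any vertex yet still straddle an edge or a face, and in three dimensions an edge-straddling cell needs its own argument. I would classify $T \in \MCsc$ according to the lowest-dimensional mesh object its closure $\overline{T}$ meets. If $\overline{T}$ meets no face $f \in \MFh$, then $T$ lies entirely inside a single element $K \in \MTh$, and any $\bv \in \mN_K$ works, exactly as before. Otherwise, pick a point $\bm{q}$ in the intersection of $\overline{T}$ with some mesh face (or edge) and split into cases by how close $\bm{q}$ is to the vertices and edges of that face.

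The first subcase is $d(\bm{q}, \bv) \le \frac{w^*}{1 + \sin\alpha}$ for some vertex $\bv$ of the relevant face: then every corner of $T$ is within $d(\bm{q},\bv) + \sqrt{3}\,s$ of $\bv$, and the spacing bound \eqref{eq_s3d} is engineered so that $\sqrt{3}\,s \le \frac{w^*\sin\alpha}{1+\sin\alpha}$ (this is weaker than \eqref{eq_s3d} and follows from it since $\frac{2\sin\frac{\alpha}{2}}{1+\sin\frac{\alpha}{2}} \le 1$), giving $d(\bvs,\bv) \le w^*$ for all $\bvs \in \mM_T$. Hence $T \subset B(\bv, w^*)$ and, since $w^* < w$, $(T\cap\Omega) \subset (B(\bv,w^*)\cap\Omega) \subset \mD{\mT_{\bv}}$. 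The remaining subcase is that $\bm{q}$ is far (more than $\frac{w^*}{1+\sin\alpha}$) from every vertex of the face containing it. Here I would further ask whether $\bm{q}$ is close to an edge $e$ of that face — say $d(\bm{q}, e) \le \frac{w^*\sin\alpha/(1+\sin\alpha)}{1+\sin\frac{\alpha}{2}}$ or some such threshold coming out of \eqref{eq_s3d} — in which case one invokes a ball-around-an-edge containment lemma (the three-dimensional analogue of Lemma~\ref{le_app_disk}, presumably also available from the appendix): the patch $\mD{\mT_e}$ around an edge contains a tube of radius proportional to $w^*\sin\alpha$ about the interior of $e$, and the constraint $w^* < l_{\min}/2$ guarantees $\bm{q}$ projects into the interior of $e$ rather than past an endpoint. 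Then $T \subset B(\bm{q}, \sqrt{3}\,s) \subset \mD{\mT_e} \subset \mD{\mT_{\bv}}$ for either endpoint $\bv$ of $e$. Finally, if $\bm{q}$ is far from all vertices and all edges of its face, one uses that the face patch $\mD{\mT_f}$ contains a slab of half-thickness proportional to $w^*\sin\alpha$ times $\sin\frac{\alpha}{2}$ over the part of $f$ bounded away from $\partial f$; again $T \subset B(\bm{q}, \sqrt{3}\,s) \subset \mD{\mT_f} \subset \mD{\mT_{\bv}}$, with boundary faces handled by intersecting with $\Omega$ throughout as in Lemma~\ref{le_patch2d}.

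The main obstacle I expect is the innermost case — $\bm{q}$ in the ``deep interior'' of a face — because it requires knowing how thick the face patch $\mD{\mT_f}$ is transversally, and that thickness is controlled not by the minimum planar angle $\alpha_{F,K}$ alone but by the minimum dihedral angle $\alpha_{D,K}$: the two elements (or one, at the boundary) glued along $f$ open up by at least $\alpha/2$ on each side, which is precisely where the $\sin\frac{\alpha}{2}$ factor in \eqref{eq_s3d} originates. Getting the constants to chain correctly — vertex ball of radius $w^*$, shrunk to an edge tube of radius $\frac{w^*\sin\alpha}{1+\sin\alpha}$, shrunk again to a face slab of half-width $\frac{w^*\sin\alpha}{1+\sin\alpha}\cdot\frac{\sin\frac{\alpha}{2}}{1+\sin\frac{\alpha}{2}}$, and checking in each case that $\sqrt{3}\,s$ does not exceed the available room — is the bookkeeping that \eqref{eq_s3d} is designed to make work, so I would state the requisite edge-tube and face-slab containment lemmas explicitly (or cite their appendix versions), verify the threshold inequalities once, and then the case analysis closes routinely. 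The reason $w^* < l_{\min}/2$ is needed, and was not needed in 2D, is that an edge in 3D can be short relative to $w$, so without this one cannot guarantee that a point near an edge projects into the edge's interior; I would make that point explicit where the edge-tube lemma is applied.
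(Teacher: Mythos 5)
Your skeleton (classify the active cells; vertex-ball case; edge-patch case via Lemma~\ref{le_app_disk}) matches the paper's, but you diverge exactly where the real work happens, and the divergence carries a genuine risk. The paper never proves a face-slab containment. Cells that cross a face but stay away from all edges and vertices are disposed of \emph{combinatorially}: the class $\MCscF$ consists of cells meeting at most one face of any element, and such a cell is automatically trapped in $K_1\cup K_2$ for the two elements sharing that face --- no metric estimate, no transversal thickness of $\mD{\mT_f}$ is ever needed. The only metrically hard case is the residual class $\MCscB$: a cell whose circumscribed ball misses every edge yet crosses two faces $f_1,f_2$ of a single tetrahedron. There the paper constructs an explicit point $\bchi$ on the line of the common edge $e=f_1\cap f_2$ (the apex of the wedge cut out by the two face-planes) and bounds $d(\bom,\bchi)\le r/\sin\frac{\alpha}{2}$ via the dihedral angle; this is where the factor $\sin\frac{\alpha}{2}$ in \eqref{eq_s3d} actually originates --- not, as you suggest, from a face-patch thickness. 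After that, the two-dimensional dichotomy ($\bchi$ near a vertex of $e$, or Lemma~\ref{le_app_disk} applied at $\bchi$) closes the case.

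Your alternative --- an edge-tube lemma plus a face-slab lemma, with $\bm{q}$ projected onto the nearest edge --- can probably be made rigorous, but two points need attention. First, both auxiliary lemmas are absent from the paper and unproved in your proposal; in particular Lemma~\ref{le_app_disk} requires its centre to lie \emph{on} the edge, so you must also absorb the projection error $d(\bm{q},\bm{q}')$ into the radius. Second, and more seriously, the lemma is stated under the specific condition \eqref{eq_s3d}, which was calibrated to the $\bchi$-wedge argument. Chaining your thresholds (vertex ball $\to$ edge tube $\to$ face slab, each stage losing a factor of $\sin\alpha$ and each needing room for the cell diameter $\sqrt{3}\,s=2r$) with the natural choices leads to a requirement of the form $s\lesssim \frac{w^*\sin^2\alpha}{\sqrt{3}(1+\sin\alpha)^2}$, which is strictly stronger than \eqref{eq_s3d}: the ratio of the two bounds is $\frac{1+\sin\alpha}{\cos\frac{\alpha}{2}\,(1+\sin\frac{\alpha}{2})}>1$. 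So, as sketched, your argument proves the statement only under a smaller grid spacing than the one hypothesized, and you would need either sharper threshold choices or the paper's combinatorial shortcut for $\MCscF$ to recover the stated constant. A last small correction: $w^*<l_{\min}/2$ plays no role in this lemma; it is used in Lemma~\ref{le_celledge3d} and the moving step, to keep the spheres $\partial B(\bv,w^*)$ around distinct vertices disjoint.
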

\begin{proof}
  For any $T \in \MCsc$, we let $B_T := B(\bom_{T}, r)$ be the
  circumscribed ball of $T$, where $\bom_{T}$ is the barycenter of $T$
  and $r = \frac{\sqrt{3} s}{2}$. 
  All active cells are classified into following types, which read
  \begin{equation}
    \begin{aligned}
      \MCscI &:= \{T \in \MCsc: \ \text{$\overline{T}$ does not
      intersect $\overline{f}$ for any
      face $f \in \MFh$} \}, \\
      \MCscF &:= \{T \in \MCsc \backslash \MCscI: \
      \text{$\overline{T}$ intersects at most one face $\overline{f}$
      for any element in $\MTh$}\}, \\
      \MCscE & := \{T \in \MCsc:  \ \text{$\overline{B}_T$ intersects
      $\overline{e}$ for an edge $e \in \MEh$} \}, \\
      \MCscB & := \MCsc \backslash (\MCscI \cup \MCscE \cup \MCscF).
    \end{aligned}
    \label{eq_cell3d}
  \end{equation}
  As in two dimensions, the proof for $T \in \MCscI$ is trivial.
  For any $T \in \MCscF$, we let $\overline{T}$ intersect
  $\overline{f}$ for a face $f \in \MFh$.
  For the case that $f \in \MFhi$ with $\mT_f = \{K_1, K_2\}$, 
  by the definition \eqref{eq_cell3d} the cell $T$ cannot intersect
  any other faces to both $K_1$ and $K_2$.
  This fact indicates that $T \subset (K_1 \cup K_2)$, and any vertex
  $\bv$ of $f$ can be picked up for $T$ with $T \subset
  \mD{\mT_{\bv}}$.
  For $f \in \MFhb$ with $\mT_f = \{K_1\}$, we know that $T$ cannot
  intersect any other faces of $K_1$, then $(T \cap \Omega) \subset
  K_1 \subset \mD{\mT_{\bv}}$ for any vertex $\bv$ of $f$.

  For $T \in \MCscE$, we let $\overline{B}_T$ intersect $\overline{e}$
  for an edge $e \in \MEh$ with $\mN_e = \{\bv_1, \bv_2\}$, 
  and we let $\bm{q}$ be any point in the intersection of
  $\overline{B}_T$ and $\overline{e}$.
  If there exists a vertex $\bv_i$ such that $d(\bm{q}, \bv_i) \leq
  \frac{w^*}{1 + \sin \alpha}$, we derive that 
  \begin{align*}
    d(\bzta, \bv_i) & \leq d(\bzta, \bm{q}) + d(\bm{q}, \bv_i) \leq 2r
    + \frac{w^* }{1 + \sin \alpha} \\
    & \leq \frac{2 w^* \sin \alpha \sin \frac{\alpha}{2} }{ (1 + \sin
    \alpha) (1 + \sin \frac{\alpha}{2})} +   \frac{w^*}{1 +
    \sin \alpha} \leq  \frac{w^* \sin \alpha}{1 + \sin \alpha}  +
    \frac{w^*}{1 + \sin \alpha} = w^*, \quad \forall \bzta \in T,  \\
  \end{align*}
  which indicates that $(T \cap \Omega) \subset \mD{\mT_{\bv_i}}$.
  If $d(\bm{q}, \bv_i) > \frac{w^* }{1 + \sin \alpha}$ for both
  vertices, we
  find that 
  \begin{equation}
    d(\bzta, \bm{q}) \leq 2r \leq  \frac{2 w^* \sin \alpha \sin
    \frac{\alpha}{2} }{ (1 + \sin \alpha) (1 + \sin \frac{\alpha}{2})}
    <  \frac{w^* \sin \alpha}{1 + \sin \alpha}, \quad \forall \bzta
    \in T.
    \label{eq_dzetaq}
  \end{equation}
  By Lemma \ref{le_app_disk}, there holds $(T \cap \Omega) \subset
  (B(\bm{q},  \frac{w^* \sin \alpha}{1 + \sin \alpha}) \cap \Omega)
  \subset \mD{\mT_{\bv_1}}$.

  For the last case that $T \in \MCscB$,
  from the definition \eqref{eq_cell3d} there exists an element $K$
  such that $\overline{T}$ intersects at least two faces $f_1$, $f_2$
  of $K$.
  Because $K$ is a tetrahedron, $f_1$ and $f_2$ will share a
  common edge $e \subset \partial K$.
  For both $i = 1, 2$, we know that the ball $B_T$ also intersects the
  face $f_i$.
  Let $\mO_i$ be the intersection between $\overline{B}_T$ and the
  plane along $f_i$, which is a disk on that plane with the center
  $\bc_i$.
  Since $T \not\in \MCscE$, $\overline{B}_T$ does not intersect any
  edge of $K$,
  which indicates that $\mO_i \subset f_i$ and $\bc_i \in f_i$.
  Let $\mN_K = \{ \bv_1, \bv_2, \bv_3, \bv_4 \}$ such that $\mN_{f_1}
  = \{ \bv_1, \bv_3, \bv_4 \}$, $\mN_{f_2} = \{\bv_1, \bv_2,
  \bv_3\}$, and $\mN_e = \{\bv_1, \bv_3\}$, see Fig.~\ref{fig_cell3d}.
  It can be observed that $\overrightarrow{\bom \bc_1} \perp f_1$ and
  $\overrightarrow{\bom \bc_2} \perp f_2$.
  Let $\bchi$ be the point on the line along $e$ such that
  $\overrightarrow{\bchi \bc_1} \perp \overrightarrow{\bv_1
  \bv_3}$.
  Then, $\bc_1, \bc_2, \bom, \bchi$ are
  all on the same plane, which is perpendicular to the vector
  $\overrightarrow{\bv_1\bv_2}$.
  We first consider the case that $\bchi$ is located in $e$, see the left
  figure in Fig.~\ref{fig_cell3d}.
  It is noted that the dihedral angle between $f_1$ and $f_2$ is the
  angle between $\overrightarrow{\bchi \bc_1}$ and
  $\overrightarrow{\bchi \bc_2}$, which is greater than $\alpha$ from
  \eqref{eq_miniangle}.
  Together with $d(\bom, \bc_i) \leq r$ for both $i = 1, 2$, we
  find that $d(\bom, \bchi) \leq \frac{r}{\sin \frac{\alpha}{2}}$.
  For any point $\bm{q} \in T$, there holds $d(\bm{q}, \bchi) \leq
  d(\bm{q}, \bom) + d(\bchi, \bom) \leq \frac{r}{\sin
  \frac{\alpha}{2}} + r$. 
  If $d(\bchi, \bv_i) \leq \frac{w^* }{1 + \sin \alpha}$ for $i =
  1$ or $i = 3$, the distance $d(\bq, \bv_i)$ can be estimated by
  \begin{align}
    d(\bm{q}, \bv_i) \leq d(\bm{q}, \bchi) + d(\bchi, \bv_i) & \leq
    \frac{r(1 + \sin \frac{\alpha}{2})}{ \sin \frac{\alpha}{2}} +
    \frac{w^*}{1 + \sin \alpha} \nonumber \\
    & \leq \frac{w^* \sin \alpha}{1 + \sin\alpha} + \frac{w^*}{1 +
    \sin \alpha} = w^*, \quad \forall \bq \in T, 
    \label{eq_dqbvi}
  \end{align}
  which immediately yields that $T \subset B(\bv_i, w^*)$ and $(T \cap
  \Omega) \subset \mD{\mT_{\bv_i}}$.
  If $d(\bchi, \bv_i) >
  \frac{w^*}{1 + \sin \alpha}$ for both $i = 1, 3$, by Lemma
  \ref{le_app_disk}, we have that $(B(\bchi, \frac{w^* \sin \alpha}{ 1
  + \sin \alpha}) \cap \Omega) \subset \mD{\mT_{\bv_i}}$. 
  For this case, the distance $d(\bq, \bv_i)$ can be estimated by
  \begin{equation}
    d(\bm{q}, \bchi) \leq \frac{r(1 + \sin \frac{\alpha}{2})}{\sin
    \frac{\alpha}{2}} \leq \frac{w^* \sin \alpha}{1 + \sin \alpha}, \quad
    \forall \bq \in T,
    \label{eq_dqchi}
  \end{equation}
  and thus $T \subset B(\bchi, \frac{w^* \sin \alpha}{ 1 + \sin
  \alpha})$.
  If $\bchi \not\in e$, see the right figure in Fig.~\ref{fig_cell3d},
  there exists $i \in [1, 2]$ such that
  $d(\bc_i, \bchi) \leq \frac{r}{\tan \frac{\alpha}{2}}$.
  Let $\bzta$ be the intersection point of the line along
  $\overrightarrow{\bc_i \bchi}$ and the line along
  $\overrightarrow{\bv_1 \bv_4}$.
  We have that $d(\bzta, \bchi) \leq d(\bc_i, \bchi)$ and 
  \begin{align}
    d(\bv_1, \bchi)  \leq  \frac{r \cot (2\alpha) }{\tan
    \frac{\alpha}{2}} &\leq \frac{ w^* \sin \alpha \sin \frac{\alpha}{2}
    \cos (2\alpha) }{ \tan \frac{\alpha}{2} \sin (2 \alpha) ( 1 + \sin
    \alpha) (1 + \sin \frac{\alpha}{2} )} \nonumber \\
    & \leq \frac{w^*}{1 + \sin \alpha} \frac{ \cos  \frac{\alpha}{2}
    \cos (2\alpha) }{ 2 \cos \alpha (1 + \sin  \frac{\alpha}{2})  } <
    \frac{w^*}{1 + \sin \alpha}.
    \label{eq_dv1chi}
  \end{align}
  The last inequality follows from the estimate $\cos(2 \beta) < 2
  \cos \beta$ for $\forall \beta \in [0, \frac{\pi}{3}]$ and $\alpha
  \leq  \frac{\pi}{3}$.
  For any $\bm{q} \in T$, the distance $d(\bm{q}, \bv_1)$ can be
  bounded as \eqref{eq_dqbvi}, which implies $(T \cap \Omega)
  \subset \mD{\mT_{\bv_1}}$.
  This completes the proof.

  \begin{figure}[htp]
    \centering
    \begin{minipage}[t]{0.46\textwidth}
      \begin{center}
        \begin{tikzpicture}[scale=1.8]
          \footnotesize
          \coordinate (A) at (0, 0);
          \coordinate (B) at (1.35, -0.8);
          \coordinate (C) at (2, 0);
          \coordinate (D) at (0.8, 1.16);
          \coordinate (C0) at (0.55, 0.2);
          \coordinate (C1) at (0.9, -0.25);
          \coordinate (O) at (0.9, 0.05);
          \coordinate (W) at (0.55, -0.326);
          \draw[thick] (A) -- (B) -- (C) -- (D) -- (A);
          \draw[thick] (B) -- (D);
          \draw[dashed] (A) -- (C);
          \draw[thick, fill=black] (C0) circle [radius=0.015];
          \draw[dashed, rotate=65] (C0) ellipse (2.5mm and 1.3mm);
          \draw[thick, fill=black] (C1) circle [radius=0.015];
          \draw[thick, fill=black] (O) circle [radius=0.015];
          \draw[thick, fill=black] (W) circle [radius=0.015];
          \draw[dashed] (C1) ellipse (2mm and 1mm);
          \draw[dashed] (C0) -- (O) -- (C1);
          \draw[dashed] (C0) -- (W) -- (C1);
          \draw[dashed] (O) -- (W);
          \node[left] at (A)  {$\bv_1$};
          \node[right] at (C) {$\bv_2$}; 
          \node[below] at (B) {$\bv_3$}; 
          \node[above] at (D) {$\bv_4$}; 
          \node[above left] at (C0) {$\bc_1$};
          \node[right] at ($(C1) - (0, 0.02)$) {$\bc_2$};
          \node[right] at (O) {$\bom$};
          \node[below] at (W) {$\bchi$};
          \node at (1.5, -0.3) {$f_2$};
          \node at (0.75, 0.8) {$f_1$};
        \end{tikzpicture}
      \end{center}
    \end{minipage}    \begin{minipage}[t]{0.46\textwidth}
      \begin{center}
        \begin{tikzpicture}[scale=1.8]
          \footnotesize
          \coordinate (A) at (0, 0);
          \coordinate (B) at (1.35, -0.4);
          \coordinate (C) at (0.35, 0.9);
          \coordinate (D) at (-0.8, 1.46);
          \coordinate (C0) at (-0.15, 0.5);
          \coordinate (C1) at (0.25, 0.39);
          \coordinate (O) at (0.9, 0.05);
          \coordinate (W) at (-0.1, 0.02);
          \coordinate (G) at (-0.13, 0.24);
          \draw[thick] (A) -- (B) -- (C) -- (D) -- (A);
          \draw[thick] (B) -- (D);
          \draw[dashed] (A) -- (C);
          \draw[dashed] (-0.35, 0.1) -- (1.7, -0.5);
          \node[left] at ($(A)+ (0.3, -0.13)$)  {$\bv_1$};
          \node[right] at (C) {$\bv_2$}; 
          \node[below] at (B) {$\bv_3$}; 
          \node[above] at (D) {$\bv_4$}; 
          \draw[thick, fill=black] (C0) circle [radius=0.015];
          \draw[thick, fill=black] (C1) circle [radius=0.015];
          \draw[thick, fill=black] (W) circle [radius=0.015];
          \draw[thick, fill=black] (G) circle [radius=0.015];
          \node[above right] at (C0) {$\bc_1$};
          \node[left] at (G) {$\bzta$};
          \node[below right] at (C1) {$\bc_2$};
          \node[below left] at (W) {$\bchi$};
          \draw[dashed] (C0) -- (W) -- (C1);
        \end{tikzpicture}
      \end{center}
    \end{minipage}
    \caption{The tetrahedron $K$ and the points $\bc_1,
    \bc_2, \bchi, \bom$.}
    \label{fig_cell3d}
  \end{figure}
\end{proof}
From Lemma \ref{le_patch3d}, in three dimensions there still exists a
relation mapping $\varphi: \MCsc \rightarrow \MNh$, where $(T \cap
\Omega) \in \mD{\mT_{\varphi(T)}}$ for any $T \in \MCsc$.
Hence, for any position $\bm{p} \in \Omega$, the node $\bv$ with
$\bm{p} \in \mD{\mT_{\bv}}$ can be rapidly determined by the Cartesian
grid $\MCs$.
In the implementation, the mapping $\varphi$ can be prepared in the
initialization stage.

Now, the second step is to find the host element in
$\mT_{\bv}$ for the given $\bm{p} \in \mD{\mT_{\bv}}$. 
This step is different from the two-dimensional case because 
elements in $\mT_{\bv}$ cannot be related to a series of angles in
three dimensions.
Thus, the procedure of localizing the corresponding angle to find the
host element does not work for elements in $\mT_{\bv}$.
But we will show that this procedure can be applied to elements in the
patch $\mT_{e}$ for any edge $e \in \MEh$.
We notice that in three dimensions, the host cell $T$ of the given
point $\bm{p}$ may not be included in any patch $\mD{\mT_e}$, see
Remark \ref{re_movingstep}. 
We further propose a moving step to seek an edge $e$ such that the
host element of $\bm{p}$ lies in $\mT_e$.

\revise{
and we define $\MCMEhc :=
\bigcup_{e \in \MEh} \MCec$.
}{Let us give some notation. 
For any edge $e \in \MEh$, we let $\MCec := \{T \in \MCsc: \ (T \cap
\Omega) \subset \mD{\mT_e} \}$ be the set of all active Cartesian
cells located in the patch $\mD{\mT_{e}}$.
This definition indicates that any cell $T \in \MCec$ can be
associated with $e$ in the sense that $(T \cap \Omega) \in
\mD{\mT_e}$.
Let $\MCMEhc := \bigcup_{e \in \MEh} \MCec$, 
and thus, any cell in $\MCMEhc$ can at least be associated 
with an edge. 
For any node $\bv \in \MNh$, we define $\MCBvc := \{T \in \MCsc: \
|\overline{T} \cap \partial B(\bv, w^*)| > 0 \}$ as the set formed by
all cells intersecting with the sphere $\partial B(\bv, w^*)$. 
For the given point $\bm{p}$ with the associated node $\bv$, i.e.
$\bm{p} \in \mD{\mT_{\bv}}$, we 
will construct a new point $\wt{\bm{p}}$ on $\partial
B(\bv, w^*)$. 
Clearly, the host cell $\wt{T}$ of $\wt{\bm{p}}$ belongs to $\MCBvc$.
In the following lemma, we show that any cell $\wh{T} \in \MCBvc$ can
be associated with an edge, i.e. $\wh{T} \in \MCMEhc$.
Consequently, the host cell of $\wt{p}$ can be associated with an edge.
}

\begin{lemma}
  Under the condition \eqref{eq_s3d}, there holds
  $\mC_{B_{\bv}}^{\circ} \subset \mC_{\MEh}^{\circ}$ for $\forall \bv
  \in \MNh$.
  \label{le_celledge3d}
\end{lemma}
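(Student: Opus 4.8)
The plan is to follow the proof of Lemma~\ref{le_patch3d} almost verbatim, but to make sure that in every case the active cell can be associated with an \emph{edge} rather than merely with a vertex, using the extra information that the cell sits on the sphere $\partial B(\bv,w^*)$. Fix $\wh{T}\in\MCBvc$ and pick a point $\bq_0\in\overline{\wh T}\cap\partial B(\bv,w^*)$, so $d(\bq_0,\bv)=w^*$. Since $\mathrm{diam}(\wh T)\le\sqrt3\,s$, every $\bzta\in\wh T$ satisfies $w^*-\sqrt3\,s\le d(\bzta,\bv)\le w^*+\sqrt3\,s$; moreover \eqref{eq_s3d} gives $\sqrt3\,s<\frac{w^*\sin\alpha}{1+\sin\alpha}$ because $\frac{2\sin(\alpha/2)}{1+\sin(\alpha/2)}<1$, hence $d(\wh T,\bv)>\frac{w^*}{1+\sin\alpha}$ and $\wh T\subset\mD{\mT_{\bv}}$. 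The point is that $\wh T$ lies in the closed star of $\bv$ at a definite distance from $\bv$.

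Next I would prove the key claim: $\wh T$ stays away from \emph{every} node of $\MTh$, in the quantitative sense needed to bypass all the ``close to a vertex'' branches of the proof of Lemma~\ref{le_patch3d}. For the node $\bv$ itself this is the estimate just obtained. For any other node $\bm w$, since $\wh T\subset\overline{\mD{\mT_{\bv}}}$ and the mesh is conforming, any node lying within reach of $\wh T$ must be a vertex of some $K\in\mT_{\bv}$, hence joined to $\bv$ by a mesh edge; thus $d(\bm w,\bv)\ge l_{\min}$, and combining with $w^*<l_{\min}/2$ and $d(\wh T,\bv)\le w^*$ gives
\[
 d(\bzta,\bm w)\ \ge\ d(\bm w,\bv)-d(\bzta,\bv)\ \ge\ l_{\min}-w^*-\sqrt3\,s\ >\ w^*-\sqrt3\,s\ >\ \tfrac{w^*}{1+\sin\alpha},\qquad\forall\,\bzta\in\wh T .
\]
(Nodes that are not in the closed star are at distance $\ge w^*+\sqrt3\,s+\tfrac{w^*}{1+\sin\alpha}$ from $\bv$ anyway, hence trivially far from $\wh T$.)

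Then I would run through the four–way classification \eqref{eq_cell3d}. If $\wh T\in\MCscI$ it lies in a single element $K$, and any edge of $K$ works; if $\wh T\in\MCscF$ it lies in $K_1\cup K_2$ with $\mT_f=\{K_1,K_2\}$ (or in $K_1$ if $f\in\MFhb$), and any edge $e$ of the common face $f$ gives $\mD{\mT_e}\supset\mD{\mT_f}\supset(\wh T\cap\Omega)$. If $\wh T\in\MCscE$, then $\overline{B}_{\wh T}$ meets an edge $e=\bv_1\bv_2$ at a point $\bq$, and $d(\bq,\bv)\le w^*+\sqrt3\,s$; the key claim (together with $d(\bq,\bv_i)\ge d(\bv_i,\bv)-d(\bq,\bv)$, and, in the subcase $\bv_i=\bv$, the direct bound $d(\bq,\bv)\ge w^*-\sqrt3\,s$) shows $d(\bq,\bv_i)>\frac{w^*}{1+\sin\alpha}$ for $i=1,2$, so the close–to–a–vertex branch of Lemma~\ref{le_patch3d} is excluded and Lemma~\ref{le_app_disk} gives $B(\bq,\frac{w^*\sin\alpha}{1+\sin\alpha})\cap\Omega\subset\mD{\mT_e}$, which contains $\wh T\cap\Omega$ since $\sqrt3\,s<\frac{w^*\sin\alpha}{1+\sin\alpha}$. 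Finally, if $\wh T\in\MCscB$ one constructs $\bom$, $\bc_1$, $\bc_2$, $\bchi$ on the common edge $e$ of two faces of an element $K$ exactly as in Lemma~\ref{le_patch3d}; here one first notes that, $\wh T$ being within $w^*$ of $\bv$, the faces it meets must contain $\bv$, so $\bv$ is an endpoint of $e$ (or not a vertex of $K$ at all), and then the key claim rules out both the subcase ``$\bchi$ close to an endpoint of $e$'' and the subcase ``$\bchi\notin e$'' (in which the proof of Lemma~\ref{le_patch3d} produces a vertex at distance $<\frac{w^*}{1+\sin\alpha}$ from $\wh T$), leaving $\bchi$ in the interior part of $e$, so Lemma~\ref{le_app_disk} again yields $\wh T\cap\Omega\subset\mD{\mT_e}$. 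In every case $\wh T\in\MCMEhc$, whence $\MCBvc\subset\MCMEhc$; the argument for $\bv\in\partial\Omega$ is identical with the obvious intersections with $\Omega$.

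The hard part will be the quantitative bookkeeping inside this case analysis, not the structure: one must verify that the margin by which $\wh T$ stays away from each relevant node — which is extracted from $\wh T\subset\overline{B(\bv,w^*+\sqrt3\,s)}$, from $d(\wh T,\bv)\ge w^*-\sqrt3\,s$, and from $w^*<\min(w,l_{\min}/2)$ — dominates the thresholds $\tfrac{w^*}{1+\sin\alpha}$ and $\tfrac{r(1+\sin(\alpha/2))}{\sin(\alpha/2)}$ (with $r=\tfrac{\sqrt3 s}{2}$) appearing in the $\MCscE$ and $\MCscB$ estimates \eqref{eq_dzetaq}--\eqref{eq_dqchi}, and that this is exactly what the constant in \eqref{eq_s3d} is engineered to guarantee across the whole admissible range $\alpha\le\pi/3$. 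The most delicate point is the $\MCscB$ case when the centre $\bv$ coincides with an endpoint of the edge $e$: there $\bchi$ can be only $\Theta(w^*)$ away from $\bv$, so one relies on $\bchi$ being simultaneously well inside $e$ from the \emph{other} endpoint (distance $\ge l_{\min}-w^*>w^*$) and on the precise form of the ball guaranteed by Lemma~\ref{le_app_disk}.
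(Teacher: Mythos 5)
Your proposal follows the same skeleton as the paper's proof (the four-way split \eqref{eq_cell3d}, the constructions $\bom, \bc_1, \bc_2, \bchi$ from Lemma \ref{le_patch3d}, and Lemma \ref{le_app_disk} at the end), but two load-bearing steps do not close as written. The first is the justification of your ``key claim''. Your argument only covers nodes joined to $\bv$ by a mesh edge (via $l_{\min} > 2w^*$); for all other nodes you assert a lower bound $w^* + \sqrt{3}\,s + \tfrac{w^*}{1+\sin\alpha}$ on their distance to $\bv$, which does not follow from anything stated: since $w^*$ may be arbitrarily close to $w$, all one knows a priori about a node not adjacent to $\bv$ is $d(\cdot,\bv) \ge w$, and $w$ can be smaller than $w^* + \sqrt{3}\,s$. (The claim is repairable --- any two distinct nodes are at distance $> 2w^*$, adjacent ones because $l_{\min} > 2w^*$ and non-adjacent ones because their open stars are disjoint and each contains $B(\cdot, w')$ for every $w' < w > w^*$ --- but this argument has to be supplied; likewise $\wh{T} \subset \overline{\mD{\mT_{\bv}}}$ is asserted without proof and is not actually available, since $w^* + \sqrt{3}\,s$ may exceed $w$.)

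The second and more serious gap is that the threshold $\tfrac{w^*}{1+\sin\alpha}$ in the key claim is too weak to ``rule out'' the bad branches of the $\MCscB$ case. To apply Lemma \ref{le_app_disk} at $\bchi$ you need $d(\bchi, \bv_i) > \tfrac{w^*}{1+\sin\alpha}$ for both endpoints of $e$, but $\bchi$ is not a point of $\wh{T}$: it can lie as far as $r + \tfrac{r}{\sin\frac{\alpha}{2}} \le \tfrac{w^*\sin\alpha}{1+\sin\alpha}$ from points of $\wh{T}$, and transferring the key claim to $\bchi$ by the triangle inequality yields only $d(\bchi,\bv_i) > \tfrac{w^*}{1+\sin\alpha} - \tfrac{w^*\sin\alpha}{1+\sin\alpha}$ (or $\tfrac{w^*}{1+\sin\alpha} - \sqrt{3}\,s$ if you use the sharper intermediate bound $w^* - \sqrt{3}\,s$), which is strictly below the required threshold --- and the threshold is attained with equality in the paper, so there is no slack to give away. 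Similarly, the subcase $\bchi \notin e$ is not excluded by the key claim: there the proof of Lemma \ref{le_patch3d} produces a vertex within $w^*$ of every point of $\wh{T}$, not within $\tfrac{w^*}{1+\sin\alpha}$, so no contradiction arises. The paper closes both subcases differently: it anchors every estimate at a point $\bq \in \overline{\wh{T}}$ with $d(\bq, \bv_j) = w^*$ exactly (the sphere-touching point), deduces the full-strength bound $d(\bq, \bv_i) > w^*$ for the other relevant vertices from $l_{\min} > 2w^*$, and only then subtracts $d(\bq,\bchi) \le \tfrac{w^*\sin\alpha}{1+\sin\alpha}$ to land exactly at $\tfrac{w^*}{1+\sin\alpha}$; and it eliminates $\bchi \notin e$ by noting that $\wh{T} \subset B(\bv_1, w^*)$ together with the pairwise disjointness of the balls $B(\cdot, w^*)$ contradicts $|\overline{\wh{T}} \cap \partial B(\bv, w^*)| > 0$. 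You need these sphere-anchored, full-$w^*$ estimates; a uniform $\tfrac{w^*}{1+\sin\alpha}$ margin over all of $\wh{T}$ loses precisely the amount the argument cannot afford.
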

\begin{proof}
  As the proof of Lemma \ref{le_patch3d}, the set $\MCsc$ is still
  decomposed into several categories as \eqref{eq_cell3d}.
  Clearly, there holds $T \in \mC_{\MEh}^{\circ}$ if $T \in \MCscI$ or
  $T \in \MCscF$ for any $T \in \mC_{B_{\bv}}^{\circ}$.

  For the case that $T \in \MCscE$, there exist a node $\bv_1$ and an
  edge $e$ such that $\overline{T}$ intersects both the sphere
  $\partial B(\bv_1, w^*)$ and $\overline{e}$.
  Let $\bzta$ be any point in the intersection between $\overline{T}$
  and $\overline{e}$. 
  If $\bv_1 \not\in \mN_e$, we know that $d(\bzta, \bv) > w^*$ for
  both $\bv \in \mN_e$. 
  By \eqref{eq_dzetaq}, we obtain that $(T \cap \Omega) \subset
  (B(\bm{q}, \frac{w^* \sin \alpha}{1 + \sin \alpha}) \cap \Omega)
  \subset \mD{\mT_e}$.
  If $\bv_1 \in \mN_e$, we let $\mN_e = \{ \bv_1, \bv_2\}$, 
  and we let 
  $\bm{q} \in \overline{T}$ such that $d(\bm{q}, \bv_1) = w^*$.
  The distance $d(\bzta, \bv_1)$ can be estimated as below,
  \begin{align*}
    d(\bzta, \bv_1) \geq d(\bq, \bv_1) - d(\bzta, \bq) > w^* -
    \frac{2 w^* \sin \alpha \sin \frac{\alpha}{2}}{ ( 1 + \sin
    \alpha)( 1 + \sin \frac{\alpha}{2})} > w^* - \frac{w^* \sin \alpha
    }{ ( 1 + \sin \alpha)} = \frac{w^*}{1 + \sin \alpha}. 
  \end{align*}
  From the triangle inequality, 
  there holds $d(\bq, \bv_2) \geq d(\bv_1, \bv_2) - d(\bq,
  \bv_1) \geq w^*$, 
  and it is similar to obtain that
  \begin{displaymath}
    d(\bzta, \bv_2) \geq d(\bq, \bv_2) - d(\bzta, \bq) >
    \frac{w^*}{1 + \sin \alpha}.
  \end{displaymath}
  Again by \eqref{eq_dzetaq}, we conclude that
  $(T \cap \Omega) \subset \mD{\mT_e}$.

  We last consider the case $T \in \MCscB$. 
  As the proof of Lemma \ref{le_patch3d}, we also 
  let $\overline{T}$ intersects two faces $f_1$ and $f_2$, which share
  a common edge $e$, 
  and we let $\bv_1, \bv_2, \bv_3, \bv_4, \bc_1, \bc_2, \bchi, \bom$
  be the same as in Fig.~\ref{fig_cell3d}.
  If $\bchi \not\in e$, by \eqref{eq_dv1chi} and \eqref{eq_dqchi} there holds $d(\bq,
  \bv_1) < w^*$ for $\forall \bq \in T$, which gives that 
  $T \subset B(\bq, w^*)$. 
  It is noticeable that the condition $w^* < \frac{l_{\min}}{2}$
  indicates that $B(\bv^+, w^*) \cap B(\bv^-, w^*) = \varnothing$ for
  $\forall \bv^+, \bv^- \in \MNh$. 
  Since $T \subset B(\bv_1, w^*)$, we conclude that $\overline{T} \cap
  \partial B(\bv^+, w^*) = \varnothing$ for $\forall \bv^+ \in \MNh$, 
  which violates the definition of $\MCBvc$. 
  It suffices to consider that $\bchi \in e$. 
  If $T \in \mC_{B_{\bv_j}}^{\circ}$ for $j = 2$ or $j = 4$,
  we let $\bq \in \overline{T}$ such that $d(\bq, \bv_j) = w^*$.
  From the triangle inequality and \eqref{eq_dqchi}, we find that
  $d(\bq, \bv_i) \geq d(\bv_i, \bv_j) - d(\bq, \bv_j)
  > w^*$ for both $i = 1, 3$, then 
  there holds
  \begin{align*}
    d(\bv_i, \bchi) \geq d(\bq, \bv_i) - (d(\bchi, \bom) + d(\bom,
    \bq)) > w^* - \frac{w^* \sin \alpha}{1 + \sin \alpha} =
    \frac{w^*}{1 + \sin \alpha}.
  \end{align*}
  By \eqref{eq_dqchi}, we obtain that $T \subset B(\bchi,
  \frac{w^* \sin \alpha}{1 + \sin \alpha})$ and $(T \cap \Omega)
  \subset \mD{\mT_e}$.
  If $T \in \mC_{B_{\bv_i}}^{\circ}$ for $i = 1$ or $i = 3$, without
  loss of generality, we assume $i = 1$.
  We derive that
  \begin{align*}
    d(\bchi, \bv_1) & \geq d(\bm{q}, \bv_1) - (d(\bm{q}, \bom) +
    d(\bom, \bchi)) \geq w^* - \frac{w^* \sin \alpha}{1 + \sin \alpha}
    = \frac{w^*}{1 + \sin \alpha}.
  \end{align*}
  By the triangle inequality, we have that $d(\bm{q}, \bv_3) \geq
  d(\bv_1, \bv_3) - d(\bm{q}, \bv_1) \geq \frac{l_{\min}}{2} >
  w^*$. 
  It is similar to deduce that $d(\bchi, \bv_3) \geq
  \frac{w^*}{1 + \sin \alpha}$. 
  Again by the estimate \eqref{eq_dqchi}, there holds $(T \cap
  \Omega) \subset \mD{\mT_e}$, which completes the proof.
\end{proof}

\revise{}{
Lemma \ref{le_celledge3d} directly indicates 
$\MCNhsc \subset \MCMEhc$, where
$\MCNhsc := \bigcup_{\bv \in \mN_h} \MCBvc$,
and also implies that there exists a relation mapping $\phi: \MCNhsc
\rightarrow \MEh$ such that  $(T \cap \Omega) \subset
\mD{\mT_{\phi(T)}}$ for any $T \in \MCNhsc$.

In the computer implementation given in Section \ref{sec_implement}, 
we actually construct the mapping $\phi$ on a larger set $\MCNhssc$ in
the sense that $\MCNhsc \subset \MCNhssc \subset \MCMEhc$.
Generally speaking, this construction will slightly accelerate the
locating algorithm, and more importantly, both the set $\MCNhssc$ and
the mapping $\phi$ can be obtained in the process of constructing the
mapping $\varphi$, which is easier than precisely identifying all
cells of $\MCNhsc$ and the associated edges. 
The mapping $\phi$ can be extended to $\MCsc$ by formally setting
$\phi(T) = -1$ for any $T \in \MCsc \backslash \MCNhssc$.

}

Now, let us present the moving step. 
Given $\bm{p} \in \mD{\mT_{\bv}}$ with its host cell $T
\in \MCsc$, 
if $\phi(T) = -1$, i.e. $T \not\in \MCNhssc$, 
we define a new point $\wt{\bm{p}}$ by
\begin{equation}
  \wt{\bm{p}} = \bv + \frac{w^*}{d(\bm{p}, \bv)}(\bm{p} - \bv).
  \label{eq_newp}
\end{equation}
A direct calculation is that $d(\wt{\bm{p}}, \bv) = {w^*} < w$,
which brings that $\bm{p}$ and $\wt{\bm{p}}$ have the same host
element in $\mT_{\bv}$.
From $\wt{\bm{p}}$, we also find out its host Cartesian cell $\wt{T}
\in \MCsc$ and set $\wt{\bv} = \varphi(\wt{T})$.
The definition of $\MCNhsc$ brings that $\wt{T} \in \MCNhssc$.
If $\phi(T) \neq -1$, i.e. $T \in \MCNhssc$,  we can directly let
$\wt{T} = T$, $\wt{\bv} = \bv$.
Consequently, we arrive at
$\wt{T} \in \MCNhssc$, and by the mapping $\phi$, we know that $\wt{e}
= \phi(\wt{T})$ satisfying $\wt{\bm{p}} \in \mD{\mT_{\wt{e}}}$.

\revise{}{
\begin{remark}
  For the given point $\bm{p}$, the main purpose of the moving step
  \eqref{eq_newp} is to 
  construct a new point $\wt{\bm{p}}$, which shares the same host
  element but is far away from all vertices in the mesh.
  For such a new point, its host Cartesian cell will be contained in a
  patch near an edge.
  If $\bm{p}$ is very close to a vertex $\bvs$, 
  its host cell $T$ will contain $\bvs$ in its interior. 
  In this case, $T$ crosses all the edges having $\bvs$ as an
  endpoint, and is not contained in any patch $\mD{\mT_e}$.
  Here we briefly show that the new point $\wt{\bm{p}}$ is distant
  from all vertices in the mesh. 
  Let $K$ be the host element for $\bm{p}$ and $\wt{\bm{p}}$. 
  Then, $\bv = \varphi(T)$ is a vertex of $K$, and we let $\bv_1, \bv_2, \bv_3$
  be the other vertices of $K$.
  Since $d(\wt{\bm{p}}, \bv) = w^*$, from the triangle
  inequality, we find that
  \begin{displaymath}
    d(\wt{\bm{p}}, \bv_j) \geq d(\bv, \bv_j) - d(\wt{\bm{p}}, \bv)
    \geq l_{\min} - w^* > w^*, \quad j = 1,2,3.
  \end{displaymath}
  For any $\wh{\bv} \in \MNh$ that is not the vertex of $K$,
  there holds $d(\wt{\bm{p}}, \wh{\bv}) \geq w > w^*$.
  We thus conclude that $\wt{\bm{p}}$ is far away from all vertices.
  Then, its host cell $\wt{T}$ can be contained in
  a patch $\mD{\mT_e}$.
  \label{re_movingstep}
\end{remark}
}

In the final step, the task turns into determining the host element in $\mT_{e}$
for a given $\bm{q} \in \mD{\mT_{e}}$, which can be
implemented by locating angles as the second step in two dimensions.
For the edge $e \in \MEh$ with vertices $\bv_1, \bv_2$, 
we fix $\bv_e = \bv_1$ and
let $P_e$ be the plane that is orthogonal to the vector
$\overrightarrow{\bv_{1} \bv_{2}}$ and passes through the vertex
$\bv_e$, see Fig.~\ref{fig_MCse}.
\revise{}{
In $P_e$, each element in $\mT_e$ will correspond to a sector.}
For any $K \in \mT_{e}$, we let $e_{K, 1}, e_{K, 2}, e_{K, 3}$ be
the three edges of $K$ sharing the common vertex $\bv_e$, and let
$e_{K, 1} = e$.
We define the set $\mE_{\bv_e} := \bigcup_{K \in \mT_{e}} \bigcup_{j =
2, 3} e_{K, j}$, which is further rewritten as $\mE_{\bv_e} = \{e_{1},
e_{2}, e_3, \ldots, e_m\}$, see Fig.~\ref{fig_MCse}.
Let $\bm{v}_{e, j}$ be the unit vector along the edge $e_j$ with the
starting point $\bv_e$ and let $\bm{w}_{e, j}$ be the projection of
$\bm{v}_{e, j}$ onto the plane $P_e$, and every $\bm{w}_{e, j}$ is
further scaled to be a unit vector still with the starting point
$\bv_e$.
\revise{}{
In $P_e$, we establish a two-dimensional local coordinate
system with $\bv_e$ as the origin. 
Let $\bm{\varepsilon} = (0,1)^T$ be the unit vector, 
and let $\theta_{e, j}$ be the angle of clockwise rotating
$\bm{\varepsilon}$ to the vector $\bm{w}_{e, j}$. 
Next, we arrange the edges $\{e_j \}_{j = 1}^m$ and the
corresponding vectors $\{ \bm{w}_{e, j} \}_{j = 1}^m$ such that
$\{\theta_{e, j}\}_{j = 1}^m$ are placed in ascending order.
By this rearrangement, it is noted that the sector shaped by vectors
$\bm{w}_{j}$ and $\bm{w}_{j+1}$ in the plane $P_e$ corresponds to 
a unique element in $\mT_e$, which has two edges with respect to
$\bm{w}_j$ and $\bm{w}_{j+1}$, see Fig.~\ref{fig_MCse}.
Let $\bm{v} := \overrightarrow{\bv_e \bm{q}}$ be the vector connecting
$\bv_e$ to $\bm{q}$, and we compute the projection of $\bm{v}$
onto the plane $P_e$ as $\bm{w}$. 
The task of locating $\bm{q}$ in $\mD{\mT_e}$ now becomes 
locating the vector $\bm{w}$ in the plane $P_e$, which is the same as
two dimensions.
Let $\theta$ be the angle of clockwise rotating $\bm{\varepsilon}$ to
the direction of $\bm{w}$. 
It remains to seek $\theta$ in $\{\theta_{e, j} \}_{j=1}^m$, and this
can be readily achieved by the standard searching algorithm. 
For every edge $e$, the plane $P_e$ and the angles $\{\theta_{e, j}
\}_{j=1}^m$ and the relations between sectors and elements can be
prepared in the initializing stage. 
}

\begin{figure}[htp]
  \centering
  \begin{minipage}[t]{0.4\textwidth}
    \footnotesize
    \begin{tikzpicture}[scale=2.2]
      \coordinate (v2) at (0, 0);
      \coordinate (v1) at (1.5, 2);
      \coordinate (A) at (-0.1, 1.35);
      \coordinate (B) at (0.6, 0.6);
      \coordinate (C) at (1.5, 0.9);
      \coordinate (D) at (0.7, 1.3);
      \coordinate (E) at (1.2, 0.6);
      \draw[dashed] (v2) -- (v1);
      \draw[thick] (v2) -- (A) -- (v1);
      \draw[thick] (v2) -- (B) -- (v1);
      \draw[dashed] (v2) -- (C);
      \draw[thick] (C) -- (v1);
      \draw[thick] (v2) -- (E) -- (v1);
      \draw[dashed] (v2) -- (D) -- (v1);
      \draw[thick] (A) -- (B) -- (E) -- (C);
      \draw[dashed] (A) --(D) -- (C);
      \node[right] at (v1) {$\bv_1(\bv_e)$};
      \node[left] at (v2) {$\bv_2$};
      \node at (0.65, 1) {$e$};
      \node[left] at (A) {$A_1$};
      \node[below right] at ($(B) - (0.1, 0)$) {$A_2$};
      \node[right] at (C) {$A_4$};
      \node[right] at ($(E) - (0, 0.05)$) {$A_3$};
      \node[above] at (D) {$A_5$};
      \node[below] at ($(A) + (0.65, 0.55)$) {$e_1(\bm{v}_{e, 1})$};
      \node[below] at ($(B) + (0.39, 0.30)$) {$e_2(\bm{v}_{e, 2})$};
      \node[below] at ($(C) + (0.06, 0.26)$) {$e_3(\bm{v}_{e, 3})$};
      \node[below] at ($(C) + (0.26, 0.65)$) {$e_4(\bm{v}_{e, 4})$};
      \node[below] at ($(D) + (0.25, 0.42)$) {$e_5(\bm{v}_{e, 5})$};
      \draw[fill=black] (v1) circle [radius = 0.025];
      \draw[fill=black] (v2) circle [radius = 0.025];
      \draw[fill=black] (A) circle [radius = 0.025];
      \draw[fill=black] (B) circle [radius = 0.025];
      \draw[fill=black] (C) circle [radius = 0.025];
      \draw[fill=black] (D) circle [radius = 0.025];
      \draw[fill=black] (E) circle [radius = 0.025];
    \end{tikzpicture}
  \end{minipage} \begin{minipage}[t]{0.4\textwidth}
    \footnotesize
    \begin{tikzpicture}[scale=2.2]
      \coordinate (v1) at (1.5, 2);
      \coordinate (A) at (-0.1, 1.35);
      \coordinate (B) at (0.6, 0.6);
      \coordinate (C) at (1.5, 0.9);
      \coordinate (D) at (0.7, 1.3);
      \coordinate (E) at (1.2, 0.6);
      \coordinate (p0) at (0.3, 1.8);
      \coordinate (p1) at (2, 1.2);
      \coordinate (p2) at (2.6, 2.1);
      \coordinate (p3) at (0.9, 2.6);
      \coordinate (a0) at (1.1, 2.05);
      \coordinate (a1) at (1.35, 2.3);
      \coordinate (a2) at (1.9, 2.);
      \coordinate (a3) at (1.6, 1.6);
      \coordinate (a4) at (1.26, 1.7);
      \draw[red, dashed, thick] (1.62, 2.1) to [out = -30, in = 20]
      (1.57, 1.8);
      \draw[dashed, ->] (v1) -- (A); 
      \draw[dashed, ->] (v1) -- (B); 
      \draw[dashed, ->] (v1) -- (C); 
      \draw[dashed, ->] (v1) -- (0.5, 1.1); 
      \draw[dashed, ->] (v1) -- (E); 
      \draw[thick] (p0) -- (p1) -- (p2) -- (p3) -- (p0);
      \node at (2.39, 1.5) {$P_e$};
      \draw[thick, ->] (v1) -- (a0);
      \draw[thick, ->] (v1) -- (a1);
      \draw[thick, ->] (v1) -- (a2);
      \draw[thick, ->] (v1) -- (a3);
      \draw[thick, ->] (v1) -- (a4);
      \node[below] at ($(A) + (0.55, 0.20)$)  {$\bm{v}_{e, 1}$};
      \node[below] at ($(B) + (0.36, 0.37)$)  {$\bm{v}_{e, 2}$};
      \node[below] at ($(C) + (-0.08, 0.03)$) {$\bm{v}_{e, 3}$};
      \node[below] at ($(C) + (0.13, 0.42)$)  {$\bm{v}_{e, 4}$};
      \node[below] at ($(D) + (0.28, 0.15)$)  {$\bm{v}_{e, 5}$};
      \node[left] at (a0)                            {$\bm{w}_{e, 1}$};
      \node[above] at ($(a1) - (0, 0.035)$)          {$\bm{w}_{e, 5}$};
      \node[right] at (a2)                           {$\bm{w}_{e, 4}$};
      \node[below right] at ($(a3) + (-0.05, 0.03)$) {$\bm{w}_{e, 3}$};
      \node[below] at ($(a4) + (-0.07, 0.03)$)       {$\bm{w}_{e, 2}$};
      \node at ($(v1) + (-0.13, 0.08)$) {$\bv_e$};
      \draw[black, fill=black] (v1) circle [radius=0.02];
      \draw[dashed, thick, ->] (v1) -- (1.7, 2.3);
      \node at (1.78, 1.82) {\tred{$\theta_{e, 3}$}};
      \node at (1.75, 2.2) {$\bm{\varepsilon}$};
    \end{tikzpicture}
  \end{minipage}
  \caption{The set $\mT_{e}$ and the plane $P_e$.}
  \label{fig_MCse}
\end{figure}

For a given point $\bm{p} \in \Omega$, finding its host element in
three dimensions is summarized as: 
\begin{equation}
  \bm{p} \xrightarrow{\step{1}} T \xrightarrow{\step{2}} \bv
  \xrightarrow{\step{3}}
  \wt{\bm{p}} \xrightarrow{\step{4}} \wt{T} \xrightarrow{\step{5}} e
  \xrightarrow{\step{6}} \revise{}{P_e \xrightarrow{\step{7}} K}.
  \label{eq_track3d}
\end{equation}
In steps \step{1} - \step{3}, we seek the host cell $T \in \MCsc$ and
let $\bv = \varphi(T)$ by the Cartesian grid. 
If $\phi(T) = -1$, we construct $\wt{\bm{p}}$ as \eqref{eq_newp}, and
find $\wt{T}$ as its host cell and let $e = \phi(\wt{T})$ for steps
\step{4} and \step{5}.
The last two steps \step{6} and \step{7} are to determine the host
element in $\mT_e$ for $\wt{\bm{p}}$.
The computational cost mainly consists of several parts: 
localizing on the Cartesian grid twice, 
constructing a new point as \eqref{eq_newp}, 
computing the pseudo angle by \eqref{eq_pangle} and $
O(\log(\frac{2\pi}{\alpha}))$ comparisons. 
In the preparation stage, we are required to establish two relation
mappings $\varphi$, $\phi$ in three dimensions.
The details of the computer implementation for mappings are presented
in next section.

\section{Computer Implementation}
\label{sec_implement}
In this section, we present details on the computer
implementation to the proposed \revise{tracking}{locating} method, 
particularly for the preparation stage. 

We start from two dimensions.
As stated in Subsection \ref{subsec_track2d}, 
the main step of the initialization step is to establish the relation
mapping $\varphi: \MCsc \rightarrow \MNh$ such that $T \in
\mD{\mT_{\varphi(T)}}$ for any $T \in \MCsc$. The construction to
$\varphi$ follows the idea
in the proof to Lemma \ref{le_patch2d}.
By the definition \eqref{eq_ccell}, for any $T \in \MCsc$,
$\varphi(T)$ can be obtained by
checking whether $\overline{T}$ is cut by an edge. 
Here, we construct $\varphi$ by finding out all Cartesian
cells that intersect with $\overline{e}$ for each $e \in \MEh$.
For any $e \in \MEh$ with $\mN_e = \{ \bv_1, \bv_2 \}$, let $T_1,
T_2 \in \MCsc$ be the host cells for $\bv_1, \bv_2$ indexed by $(i_1,
j_1), (i_2, j_2)$, respectively. 
Let 
\begin{equation}
  (i_{e, 1}, j_{e, 1}) = (\min(i_1, i_2), \min(j_1, j_2)), \quad
  (i_{e, 2}, j_{e, 2}) = (\max(i_1, i_2), \max(j_1, j_2)), 
  \label{eq_boxdomain}
\end{equation}
and we construct a bound box $\mB_e$ containing all Cartesian cells 
indexed by $(i_l, j_l)$ for $\forall (i_l, j_l) \in [i_{e, 1}, i_{e,
2}] \times [j_{e, 1}, j_{e, 2}]$, see left figure in
Fig.~\ref{fig_box}.
It is noted that all Cartesian cells that are cut by the edge
$\overline{e}$ are
also included in $\mB_e$. 
For every cell $T \in \mB_e$, $T$ can be described by two vertices
$\bm{x}_{T, 1}$, $\bm{x}_{T, 2}$ with $\bm{x}_{T, 1} < \bm{x}_{T, 2}$.
Then, $\overline{T}$ and $\overline{e}$ intersecting is equivalent to
the inequality $\bm{x}_{T, 1} \leq \bv_1 + t (\bv_2 - \bv_1) \leq
\bm{x}_{T, 2}$ admits a solution $t \in [0, 1]$, which can be easily
solved. 
For $T \in \mB_e$, if such $t$ exists, we let $\bm{q} := \bv_1 + t
(\bv_2 - \bv_1)$ be a point in the intersection of $\overline{T}$ and
$\overline{e}$, and we know that $T \in \MCscB$.
From the proof of Lemma \ref{le_patch2d}, the vertex of $e$ that is
closest to $\bm{q}$ can be the associated node $\varphi(T)$ for $T$. 
By this procedure, all cells that are cut by $e$ have been associated
with corresponding nodes.
Then, the relation mapping $\varphi$ on $\MCscB$ can be constructed in
a piecewise manner. 

We next turn to the set $\MCscI$,  where any cell $T \in
\MCscI$ will be entirely contained in an element of $\MTh$. 
For any element $K$, we still construct a box $\mB_K$ containing all
cells indexed by  $\forall (i_l, j_l) \in [i_{K, 1}, i_{K, 2}] \times
[j_{K, 1}, j_{K, 2}]$, 
where $i_{K, 1}(j_{K, 1})$, $i_{K, 2}(j_{K, 2})$ are indices
corresponding to the min/max
$x(y)$-coordinates of all
vertices as \eqref{eq_boxdomain}.
All cells that are entirely contained in $K$ are also included in
$\mB_K$. 
For every $T \in \mB_K$, $T \subset K$ is equivalent to all vertices
of $T$ are located in $K$, which can be readily checked. 
Then, any vertex of $K$ can be specified as $\varphi(T)$.
Moreover, since $T \subset K$, we also mark $K$ as the host element
for $T$, and the whole algorithm can be simplified and accelerated
because any point $\bm{p} \in T$ immediately gives $\bm{p} \in K$.
We note that although selecting a very fine background grid will
provide a slightly better computational efficiency because the set
$\MCscI$ will have more cells in this case, 
the initialization will meanwhile become very time-consuming as $s$
approaches zero, and also there is no need to construct a grid with
very small $s$ in our method.

We present the initializing step in Algorithm~\ref{alg_initialize2d}.
Generally speaking, the initialization has an $O(n_e)$ computational
complexity, where $n_e$ denotes the number of elements in $\MTh$. 
Though the computational time grows linearly as the mesh is
refined, the initialization only needs to be implemented once for a
given mesh.
In addition, the particle locating algorithm \eqref{eq_track2d} is
presented in Algorithm~\ref{alg_track2d}.

\begin{figure}[htp]
  \begin{minipage}[t]{0.46\textwidth}
    \footnotesize
    \begin{center}
      \begin{tikzpicture}[scale=1.8]
        \coordinate (A) at (0, -0.1);
        \coordinate (B) at (2.25, 1.08);
        \draw[thick] (A) -- (B);
        \foreach \y in {-1,...,5}
        \foreach \x in {-1,...,8} 
        \draw[thick, fill = black] (\x*0.38-0.13, -0.25 + \y*0.38)  circle [radius=0.02];
        \draw[thick, dashed] ($(A) + (-0.3, -0.3)$) rectangle ($(B) +
        (0.50, 0.3)$);
        \draw[thick, dashed, blue] (-0.13, -0.25) rectangle (0.25, 0.13);
        \foreach \y in {3}
        \foreach \x in {6} 
        \draw[thick, dashed, blue]  (\x*0.38-0.13, -0.25 + \y*0.38)
        rectangle  (\x*0.38 + 0.25, 0.13 + \y*0.38);
        \node[above] at (A) {$A$};
        \node[right] at (B) {$B$};
        \draw[thick, fill = red] (A) circle [radius=0.02];
        \draw[thick, fill = red] (B) circle [radius=0.02];
        \node at (1.8, -0.52) {\footnotesize box $\mB_e$};
        \node at (0.7, -0.1) {\footnotesize $(i_{e, 1}, j_{e, 1})$};
        \node at (2.35, 0.72) {\footnotesize $(i_{e, 2}, j_{e, 2})$};
      \end{tikzpicture}
    \end{center}
  \end{minipage}
  \begin{minipage}[t]{0.46\textwidth}
    \footnotesize
    \begin{center}
      \begin{tikzpicture}[scale=1.8]
        \coordinate (A) at (-0.8, -0.29);
        \coordinate (B) at (0.75, -0.02);
        \coordinate (C) at (0, 1.15);
        \coordinate (D) at (0.0, 0.39);
        \draw[thick] (A) -- (B) -- (C) -- (A);
        \node[left] at (A) {$A$};
        \node[below] at ($(A) + (0, -0.05)$) {$(i_{K, 1}, j_{K, 1})$};
        \node[right] at ($(C) + (0.6, 0.15)$) {$(i_{K, 2}, j_{K, 2})$};
        \node[right] at (B) {$B$};
        \node[above] at (C) {$C$};
        \draw[thick, dashed] ($(A) + (-0.03, -0.03)$) rectangle ($(C) +
        (0.8, 0.035)$);
        \node at (1.13, 0.39) {box $\mB_K$};
        \foreach \x in {0,...,9} 
        \draw[thick, fill = black] (\x*0.3 -1.35, -0.65)  circle [radius=0.02];
        \foreach \x in {0,...,9} 
        \draw[thick, fill = black] (\x*0.3 -1.35, -0.35)  circle [radius=0.02];
        \foreach \x in {0,1,2,6,7,8,9} 
        \draw[thick, fill = black] (\x*0.3 -1.35, -0.05)  circle [radius=0.02];
        \foreach \x in {3,4,5}
        \draw[thick, fill = blue, blue] (\x*0.3 -1.35, -0.05)  circle [radius=0.02];
        \foreach \x in {0,1,2,6,7,8,9} 
        \draw[thick, fill = black] (\x*0.3 -1.35, 0.25)  circle [radius=0.02];
        \foreach \x in {3,4,5}
        \draw[thick, fill = blue, blue] (\x*0.3 -1.35, 0.25)  circle [radius=0.02];
        \foreach \x in {0,1,2,3,6,7,8,9} 
        \draw[thick, fill = black] (\x*0.3 -1.35, 0.55)  circle [radius=0.02];
        \foreach \x in {4,5}
        \draw[thick, fill = blue, blue] (\x*0.3 -1.35, 0.55)  circle [radius=0.02];
        \foreach \x in {0,1,2,3,6,7,8,9} 
        \draw[thick, fill = black] (\x*0.3 -1.35, 0.85)  circle [radius=0.02];
        \foreach \x in {4,5}
        \draw[thick, fill = blue, blue] (\x*0.3 -1.35, 0.85)  circle [radius=0.02];
        \foreach \x in {0,1,2,3,4,5,6,7,8,9} 
        \draw[thick, fill = black] (\x*0.3 -1.35, 1.15)  circle [radius=0.02];
        \foreach \x in {0,...,9} 
        \draw[thick, fill = black] (\x*0.3 -1.35, 1.45)  circle [radius=0.02];
        \foreach \x in {3,4,5}
        \foreach \y in {0}
        \draw[thick, dashed, blue] (\x*0.3 -1.35, -0.05+\y*0.3) rectangle (\x*0.3 -
        1.05, 0.25+\y*0.3);
        \foreach \x in {4}
        \foreach \y in {1, 2}
        \draw[thick, dashed, blue] (\x*0.3 -1.35, -0.05+\y*0.3) rectangle (\x*0.3 -
        1.05, 0.25+\y*0.3);
      \end{tikzpicture}
    \end{center}
  \end{minipage}
  \caption{The box $\mB_K$, and all blue nodes are mapped with $K$.}
  \label{fig_box}
\end{figure}

\begin{algorithm}[htp]
  \caption{Initializing in two dimensions}
  \label{alg_initialize2d}
  \KwIn{the mesh $\MTh$, the Cartesian grid $\MCs$;}
  construction of $\varphi$: \\[1mm]
  \For{each $e \in \MEh$}{
  construct the box $\mB_e$ from $\mN_e = \{\bv_1, \bv_2\}$; \\[1mm]
  \For{each $T \in \mB_e$}{
  \If{there exists $t \in [0, 1]$ such that $\bm{q} = \bv_1 +
  t(\bv_2 - \bv_1) \in T$}{ 
  \If{$d(\bm{q}, \bv_1) < d(\bm{q}, \bv_2)$}{
  let $\varphi(T) = \bv_1$; 
  }\Else{
  let $\varphi(T) = \bv_2$;
  }}}}
  \For{each $K \in \MTh$}{
  construct the box $\mB_K$ from $\mN_K$; \\[0mm]
  \For{each $T \in \mB_K$}{
  \If{all vertices of $T$ located in $K$}{
  let $\varphi(T)$ be any vertex in $\mN_K$; \\[1mm]
  mark $K$ as the host element for $T$;
  }}}
  construction of $\{\theta_{\bv, i}\}_{i =1}^n$ for all vertices; \\[1mm]
  \For{each $\bv \in \mN$}{
  \For{each $e_{\bv, i} \in \mE_{\bv}$}{
  compute $\theta_{\bv, i}$ from the vertices of $e_{\bv, i}$ using
  \eqref{eq_pangle}; \\[1mm]
  }}
\end{algorithm}

\begin{algorithm}[htp]
  \caption{\revise{Tracking}{Locating} algorithm in two dimensions: }
  \label{alg_track2d}
  \KwIn{the point $\bm{p}$;}
  \KwOut{the host element $K$;}
  find the host cell $T \in \MCsc$ with the Cartesian grid; \\[1mm]
  \If{$T \in \MCscI$}{ return the host element $K$ of $T$; }
  \Else{
  set $\bv = \varphi(T)$; \\[1mm]
  compute $\theta$ from $\overline{\bv \bm{p}}$ using
  \eqref{eq_pangle}; \\[1mm]
  seek $K$ by searching $\theta$ in $\{\theta_{\bv,i}\}_{i = 1}^n$;
  \\[1mm] 
  return $K$;
  }
\end{algorithm}

In three dimensions, the preparation stage is similar to two
dimensions, where we will establish mappings $\varphi$ and $\phi$ and
construct the set $\MCNhssc$ simultaneously.
We first initialize $\MCNhssc$ as empty.
For any edge $e \in \MEh$ with $\mN_e = \{ \bv_1, \bv_2\}$, we
construct a box $\mB_e$ containing all cells indexed by $\forall (i_l,
j_l, k_l) \in [i_{e, 1} - 1, i_{e, 2} + 1] \times [j_{e, 1} - 1, j_{e,
2} + 1] \times
[k_{e, 1} - 1, k_{e, 2} + 1]$, where $i_{e, 1}(j_{e, 1}, k_{e, 1})$,
$i_{e,2}(j_{e, 2}, k_{e, 2})$ are indices from the min/max
$x(y,z)$-coordinates to all vertices as \eqref{eq_boxdomain}.
By the proof to Lemma \ref{le_patch3d}, we are required to find out
all cells whose circumscribed balls are cut by $\overline{e}$, and
such cells are included in $\mB_e$.
Let $p(t) = \bv_1 + t(\bv_2 - \bv_1)$ be the parameter equation to the
line $L_e$ along $e$. 
For any $T \in \mB_e$, if $d(\bom_T, L_e) < \frac{\sqrt{3}}{2} s$, we
compute $t_1 < t_2$ such that $p(t_1), p(t_2)$ are intersection points
between $B_T$ and $L_e$. 
Then, $\overline{B}_T$ and $\overline{e}$ intersecting is equivalent
to $[0, 1] \cap [t_1, t_2] \neq \varnothing$, which can be easily
checked. 
By this procedure, all cells having circumscribed balls cut by $e$ are
found. 
For such $T$, we let $t$ be anyone in $[0, 1] \cap [t_1, t_2]
$, and the closest vertex of $e$ to $p(t)$ 
can be selected as $\varphi(T)$ from the proof of Lemma
\ref{le_patch3d}.
By Lemma \ref{le_celledge3d}, for any $T \in \MCNhsc$ that intersects
with $\overline{e}$, any point $\bq$ in the intersection
satisfies that $d(\bq, \bv_i) >  \frac{w^*}{1 + \sin \alpha}$ for both
$i = 1, 2$.
Hence, if there holds $d(p(t) ,\bv_i) >  \frac{w^*}{1 + \sin \alpha}$
for both $i = 1, 2$, we know that $T \in \MCMEhc$ with $\phi(T) = e$,
and we add $T$ into the set $\MCNhssc$.

We next consider the cells that have intersection with some faces. 
For any $T \in \MCsc$, we construct a set $\mF_T$ consisting of all
faces intersecting $\overline{T}$. 
For any face $f \in \MFh$ with $N_f = \{\bv_1, \bv_2, \bv_3\}$, 
we let $i_{f, 1}(j_{f, 1}, k_{f, 1})$, $i_{f, 2}(j_{f, 2}, k_{f, 2})$ 
be indices corresponding to the min/max $x(y, z)$-coordinates from
vertices of $f$. 
We similarly define the box $\mB_f$ containing all cells indexed 
by $\forall (i_l, j_l, k_l) \in [i_{f, 1}, i_{f, 2}] \times [j_{f, 1},
j_{f,2}] \times [k_{f,1}, k_{f,2}]$. 
All cells cut by $\overline{f}$ are contained in $\mB_f$.
For each cell $T \in \mB_f$, we let $\bm{x}_{T, 1}, \bm{x}_{T, 2}$ be
two vertices of $T$ with $\bm{x}_{T, 1} < \bm{x}_{T, 2}$. 
Then, $\overline{T}$ intersecting $\overline{f}$ is equivalent to the
inequalities $\bm{x}_{T, 1} \leq \bv_3 + t_1(\bv_1 - \bv_3) +
t_2(\bv_2 - \bv_3) \leq \bm{x}_{T, 2}$, $t_1 + t_2 \leq 1$, $t_1 \geq
0$, $t_2 \geq 0$ admit a solution $(t_1, t_2)$, which can be easily
solved by methods of finding feasible solutions.
If such $(t_1, t_2)$ exists, we add $f$ into the set $\mF_T$.  All
sets $\mF_T(\forall T \in \MCsc)$ can be constructed in a piecewise
manner. 
Then,
for any $T \in \MCsc$, if $\mF_T$ is empty, we know that $T \in
\MCscI$, whose host element will be given later.
If $\mF_T = \{f \}$ only has one member, from the definition
\eqref{eq_cell3d} any vertex and any edge of $f$ can be selected as
$\varphi(T)$ and $\phi(T)$, respectively, and also we add $T$ to
$\MCNhssc$.
If $\mF_T$ has at least two elements and $\varphi(T)$, $\phi(T)$ have
not been determined, then $T \in \MCscB$ and there exist
two faces $f_1, f_2 \in \mF_T$ such that $f_1, f_2$ are faces of an
element sharing a common edge $e$. 
By the proof to Lemma \ref{le_patch3d}, the point $\bchi$ on the line
along $e$ can be readily computed. 
The closest vertex of $e$ to $\bchi$ can be chosen as $\varphi(T)$. 
If $d(\bchi, \bzta) > \frac{w^*}{1 + \sin \alpha}$ for $\forall \bzta
\in \mN_e$, we add $T$ to $\MCNhssc$ and give $\phi(T) = e$.

We finally consider cells in $\MCsc$. 
For any element $K$, we also construct a box $\mB_K$ containing cells
indexed by $\forall (i_l, j_l, k_l) \in [i_{K, 1}, i_{K, 2}] \times
[j_{K, 1}, j_{K, 2}] \times [k_{K, 1}, k_{K, 2}]$, where
$i_{K,1}(j_{K, 1}, k_{K, 1})$,  $i_{K,2}(j_{K, 2}, k_{K, 2})$ are
indices corresponding to min/max $x(y, z)$-coordinates to vertices of
$K$.
We know that all cells located in $K$ are also included in $\mB_K$.
For any $T \in \mB_K$, $T \subset K$ is equivalent to $\bv \in K$ for
all vertices $\bv \in \mN_K$.
If $T \subset K$, we also mark $K$ as the host element to $T$ since
$\bq \in T$ directly gives $\bq \in K$.

The initialization step is shown in Algorithm \ref{alg_initialize3d},
which also has the computational complexity of $O(n_e)$. 
The \revise{tracking}{particle locating} algorithm \eqref{eq_track3d}
is given in Algorithm \ref{alg_track3d}.

\begin{algorithm}[htp]
  \caption{Initializing in three dimensions}
  \label{alg_initialize3d}
  \KwIn{the mesh $\MTh$, the Cartesian grid $\MCs$;}
  construction of $\varphi$ and $\phi$ and $\MCNhssc$: \\[1mm]
  \For{each $e \in \MEh$}{ 
  construct the box $\mB_e$ from $\mN_e = \{\bv_1, \bv_2\}$; \\[1mm]
  \For{each $T \in \mB_e$}{
  \If{there exists $t \in [0, 1]$ such that $\bq = \bv_1 + t(\bv_2
  - \bv_1) \in \overline{B}_T$}{
  \If{$d(\bq, \bv_1) < d(\bq, \bv_2)$}{
  let $\varphi(T) = \bv_1$; 
  }\Else{
  let $\varphi(T) = \bv_2$;
  }
  \If{$d(\bq, \bv_1) > \frac{w^*}{1 + \sin \alpha}$ and $d(\bq, \bv_2)
  > \frac{w^*}{1 + \sin \alpha}$ }{
  add $T$ to $\MCNhssc$ with $\phi(T) = e$;
  }}}} 
  \For{each $f \in \MFh$}{
  construct the box $\mB_f$ from $\mN_f$; \\[1mm]
  \For{each $T \in \mB_f$}{
  \If{$T$ intersects $f$}{
  add $f$ to $\mF_T$;
  }}}
  \For{each $T \in \MCsc$}{
  \If{$\varphi(T)$ and $\phi(T)$ have been valued}{ continue;}
  \If{$\mF_T = \{ f \}$ has one member}{  
  let $\varphi(T)$ be any vertex of $f$ and let $\phi(T)$ be any edge
  of $f$; \\[1mm]
  add $T$ to $\MCNhssc$;
  }
  \If{$\#\mF_T \geq 2$}{
  find $f_1, f_2 \in \mF_T$ to be faces of an element sharing a common
  edge $e$ with $\mN_e = \{\bv_1, \bv_2\}$; \\[1mm]
  calculate the point $\bchi$; \\[1mm]
  \If{$d(\bchi, \bv_1) < d(\bchi, \bv_2)$}{
  let $\varphi(T) = \bv_1$; 
  }\Else{
  let $\varphi(T) = \bv_2$;
  }
  \If{$d(\bchi, \bv_1) > \frac{w^*}{1 + \sin \alpha}$ and $d(\bchi, \bv_2)
  > \frac{w^*}{1 + \sin \alpha}$ }{
  add $T$ to $\MCNhssc$ with $\phi(T) = e$;
  }}}
  \For{each $K \in \MTh$}{
  construct the box $\mB_K$ from $\mN_K$; \\[0mm]
  \For{each $T \in \mB_K$}{
  \If{all vertices of $T$ located in $K$}{
  add $T$ to $\MCNhssc$, and let $\varphi(T)$ and $\phi(T)$ be any
  vertex and any edge of $K$; \\[1mm]
  mark $K$ as the host element for $T$;
  }}}
  construction of $\{\theta_{e, i}\}_{i =1}^n$ for all edges; \\[1mm]
  \For{each $e \in \MEh$}{
  mark a vertex as $\bv_e$ and store the plane $P_e$; \\[1mm]
  \For{each $e_{j} \in \mE_{\bv_e}$}{
  compute $\theta_{e, j}$ from vectors $\bm{w}_{e, j}$ using
  \eqref{eq_pangle} on the plane $P_e$; \\[1mm]
  }}
\end{algorithm}

\begin{algorithm}[htp]
  \caption{\revise{Tracking}{Locating} algorithm in three dimensions}
  \label{alg_track3d}
  \KwIn{the point $\bm{p}$;}
  \KwOut{the host element $K$;}
  find the host cell $T \in \MCsc$ with the Cartesian grid; \\[1mm]
  \If{$T \in \MCscI$}{ return the host element $K$ of $T$; }
  \Else{
  \If{$\phi(T) = -1$} {
  set $\bv = \varphi(T)$ and compute $\wt{\bm{p}}$ using
  \eqref{eq_newp}; \\[1mm]
  find the host cell $\wt{T} \in \MCsc$ such that $\wt{\bm{p}} \in
  \wt{T}$;
  \\[1mm]
  update $T = \wt{T}$ and $\bm{p} = \wt{\bm{p}}$;}
  set $\bv = \varphi(T)$ and $e = \phi(T)$; \\[1mm]
  compute $\theta$ on the plane $P_e$ from $\overrightarrow{\bv_e
  \bm{p}}$; \\[1mm]
  seek $K$ by searching $\theta$ in $\{\theta_{e, j}\}_{j = 1}^n$;
  \\[1mm]
  return $K$;
  }
\end{algorithm}

\section{Numerical Results}
\label{sec_numericalresults}
In this section, we present a series of numerical experiments in two
and three dimensions to demonstrate the performance of the
proposed algorithm.
\revise{}{
All numerical tests are carried out on a computer equipped
with an Intel Core i7-13700K CPU and 64GB RAM.}

\subsection*{Example 1}
In the first example, we test our method in two dimensions. 
The computational domain is selected as $\Omega = (-1, 1)^2$ and the
background domain is chosen to be $\Omega^* = (-1 - \tau, 1 + \tau)^2$
with $\tau = 0.05$.
We consider a series of triangular meshes over $\Omega$ with the mesh
size $h = 1/10, 1/20, 1/40, 1/80$, see Fig.~\ref{fig_triangularmesh}.
The grid spacing $s$ is \justsubstitute{chosen}{taken} as $\frac{w^*
\sin \alpha}{\sqrt{2}(1 + \sin \alpha)}$ with $w^* = w - 0.005$.
In our setting, we first randomly generate $10^6$ points in the domain
$\Omega$, whose coordinates $(x, y) \sim (U(-1, 1))^2$ come from the
uniform distribution on $(-1, 1)$.
For each point $\bm{p}$ with its host element $K$, we generate 
random vectors $\bm{v} = (r \cos \theta, r \sin \theta)$ by $(r,
\theta) \sim U(0, \delta h_K) \times U(0, 2\pi)$ until
$\wt{\bm{p}} = \bm{p} + \bm{v} \in \Omega$.
Here $\delta$ is a parameter that measures the distance of the
trajectory in each movement for points.
We update their positions by letting $\bm{p} =
\wt{\bm{p}}$ for all points and seek their host elements again. 
In our tests, this locating process will
be repeated for 100 times. 

\revise{As a numerical comparison, we also adopt the neighbour searching
method \cite{Macpherson2009particle} to the host element for
every point in each step.}{As a numerical comparison, we adopt the
neighbour searching method \cite{Macpherson2009particle} and the
standard auxiliary structured grid method \cite{Seldner1988algorithms,
Li2019fast}
to locate points in each step.}
\justsubstitute{The main idea of this method}{The main idea of the
neighbour searching method} is to move the given
point to the next possible host element by finding the closest facet
of its host element that is intersected by the trajectory.
This searching process is repeated until the final host element is
found.
The implementation of this method is straightforward, but for the
point with a long trajectory, this algorithm might become inefficient.
\revise{}{
In the auxiliary structured grid method, for every structured cell 
a list of all unstructured elements that
intersect this cell is stored. 
After locating the given point in a structured cell, 
the host element is determined by a series of point-in-element tests
on the elements in the list of that cell.
Compared to the proposed method, this method can be regarded as
constructing a different element patch for every structured cell, 
whereas the point-in-element tests are required on element patches. }

We select $\delta = 0.1, 1, 5$ to test the methods for points with
short and long trajectories. 
The CPU times are reported in Tab.~\ref{tab_ex1}.
It can be seen that the computational time of the initialization stage
linearly depends on the number of elements.
We note that the initialization only needs to be done once before
performing the locating algorithm.
For different $\delta$, our algorithm has almost the same CPU times,
which illustrates that our method is robust to the position of the
point. 
For the neighbour searching method, the costed time increases
significantly for large $\delta$, and this method needs the trajectory
for the given particle while only coordinates are required in our
algorithm.
More importantly, even for small $\delta = 0.1$, our method is also
numerically detected to be more efficient than the neighbour searching
method. 
\revise{}{
The auxiliary structured grid method still needs a similar
initialization stage to prepare lists for structured cells, and here
we only report the times of locating particles. 
Similar to the proposed method, this method only uses the coordinates 
for locating, and the CPU times are independent of $\delta$. 
However, from numerical results, our method demonstrates a
significantly higher efficiency.
The reason is that the point-in-element tests are not required for
locating in the patch, which is a distinct advantage in our method.
}

\begin{figure}[htp]
  \centering
  \includegraphics[height=0.22\textwidth]{./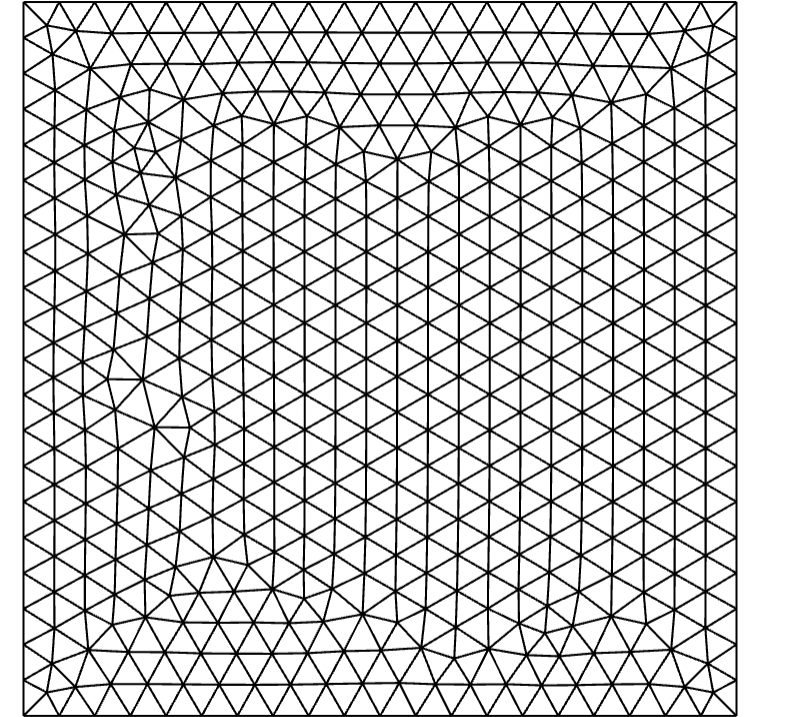}
  \hspace{50pt}
  \includegraphics[height=0.22\textwidth]{./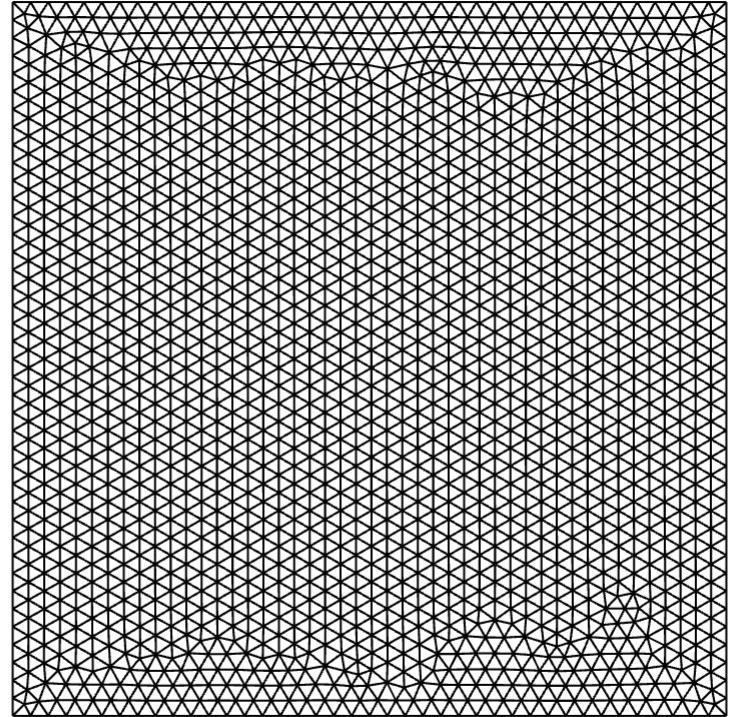}
  \caption{The triangular mesh $\MTh$ with $h = 1/10$ (left) / $h =
  1/20$ (right).}
  \label{fig_triangularmesh}
\end{figure}

\begin{table}
  \centering
  \renewcommand{\arraystretch}{1.5}
  \scalebox{0.75}{
  \begin{tabular}{p{1.5cm}|p{1.85cm}|p{1.35cm}
    |p{1.35cm}|p{1.35cm}|p{1.35cm}|p{1.35cm}|p{1.35cm}|p{1.35cm}|p{1.35cm}|p{1.35cm}}
    \hline\hline
    & \multicolumn{4}{c|}{patch searching method} 
    & \multicolumn{3}{c|}{neighbour searching method} 
    & \multicolumn{3}{c}{auxiliary structured grid method} \\
    \cline{2-11}
    & initialization & $\delta = 0.1$ & $\delta = 1$ & $\delta = 5$ & 
    $\delta = 0.1$ & $\delta = 1$ & $\delta = 5$ &
    $\delta = 0.1$ & $\delta = 1$ & $\delta = 5$ \\
    \hline
    $h = 1/10$ & 0.002
    & 1.369 & 1.436 & 1.426
    & 2.007 & 3.993 & 11.57 
    & 3.831 & 3.921 & 3.879 \\
    \hline
    $h = 1/20$ & 0.006 
    & 1.592 & 1.588 & 1.586
    & 2.259 & 4.575 & 13.99 
    & 4.773 & 4.758 & 4.701 \\
    \hline
    $h = 1/40$ & 0.023 
    & 1.820 & 1.816 & 1.812
    & 3.592 & 6.663 & 23.29 
    & 5.732 & 5.961 & 5.861 \\
    \hline
    $h = 1/80$ & 0.093 
    & 2.809 & 2.816 & 2.818
    & 4.945 & 10.28 & 43.28 
    & 8.552 & 8.718 & 8.816 \\
    \hline\hline
  \end{tabular}}
  \caption{The CPU times in Example 1.}
  \label{tab_ex1}
\end{table}

\subsection*{Example 2} 
In this example, the computational domain $\Omega$ and the background
domain $\Omega^*$ are the same as Example 1.
Here we perform the algorithm on a family of polygonal
meshes with elements $N = 2082, 8272, 33181, 132773$, which contain 
both triangular and quadrilateral elements, see
Fig.~\ref{fig_polymesh}.
The polygonal meshes are generated by the package GMSH
\cite{geuzaine2009gmsh}.
Although the theoretical analysis is established on triangular meshes,
the proposed algorithm also works for polygonal meshes,
merely requiring that the relation mapping $\varphi$ can be
constructed. 
In this test, $\varphi$ is still constructed by Algorithm
\ref{alg_initialize2d}.
The numerical setting is the same as Example 1 by randomly generating
$10^6$ points in $\Omega$, and then move points by random vectors in
each locating step. 
The CPU times are displayed in Tab.~\ref{tab_ex2}, which are similar
to the results on triangular meshes.
The detailed analysis for polygonal meshes is now considered as a
future work.

\begin{figure}[htp]
  \centering
  \includegraphics[height=0.25\textwidth]{./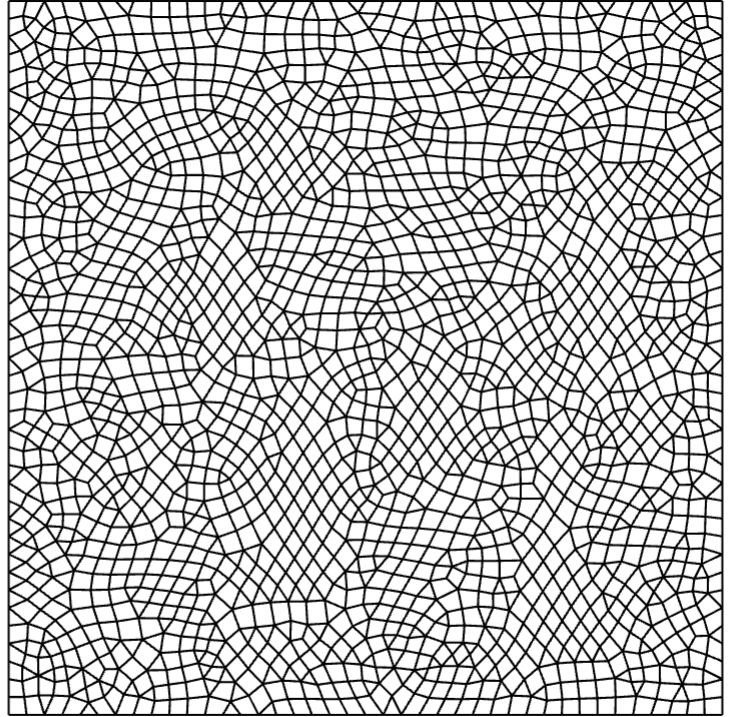}
  \hspace{30pt}
  \includegraphics[height=0.25\textwidth]{./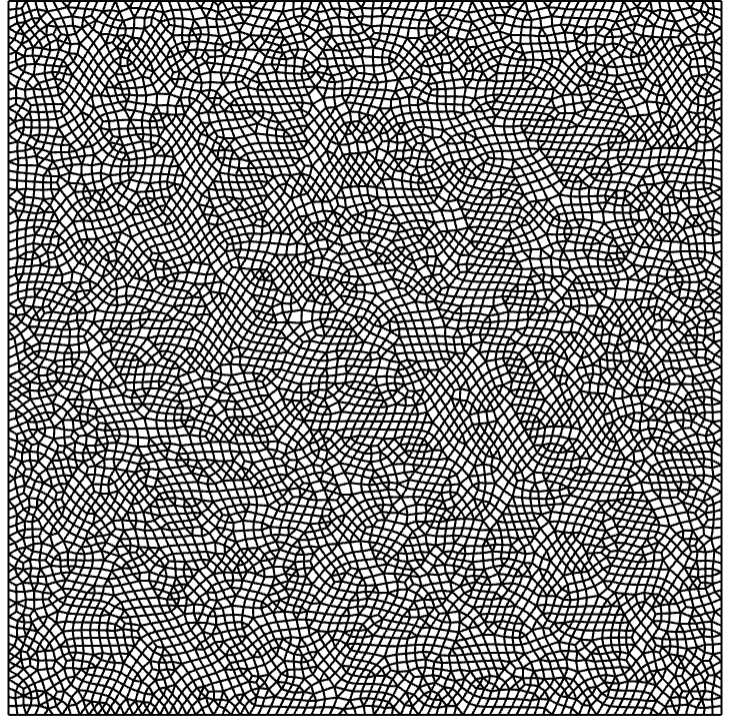}
  \caption{The polygonal mesh $\MTh$ with $2082$ elements (left) /
  $8272$ elements (right).}
  \label{fig_polymesh}
\end{figure}

\begin{table}
  \centering
  \renewcommand{\arraystretch}{1.5}
  \scalebox{0.8}{
  \begin{tabular}{p{1.5cm}|p{1.9cm}|p{1.25cm}
    |p{1.25cm}|p{1.25cm}|p{1.25cm}|p{1.25cm}|p{1.25cm}}
    \hline\hline
    \multirow{2}{*}{elements} & \multicolumn{4}{c|}{patch searching method} 
    & \multicolumn{3}{c}{neighbour searching method} \\
    \cline{2-8}
    & initialization & $\delta = 0.1$ & $\delta = 1$ & $\delta = 5$ & 
    $\delta = 0.1$ & $\delta = 1$ & $\delta = 5$ \\
    \hline
    $2082$ & 0.007 
    & 1.150 & 1.020 & 1.005 
    & 2.653 & 3.973 & 9.608 \\
    \hline
    $8272$ & 0.029
    & 1.136 & 1.152 & 1.125
    & 3.129 & 5.096 & 13.18 \\
    \hline
    $33181$ & 0.138
    & 1.938 & 1.939 & 1.923
    & 4.721 & 7.576 & 20.21 \\
    \hline
    $132773$ & 0.561
    & 2.729 & 2.743 & 2.711
    & 8.363 & 15.36 & 50.28 \\
    \hline\hline
  \end{tabular}}
  \caption{The CPU times in Example 2.}
  \label{tab_ex2}
\end{table}

\subsection*{Example 3}
This last example examines the proposed algorithm in three
dimensions. 
We choose the computational domain $\Omega = (0, 1)^3$ and select
$\Omega^* = (-\tau, 1 + \tau)^3$ covering the whole domain
$\Omega$ with $\tau = 0.05$.
The meshes are taken as a series of successively refined tetrahedral
meshes with the mesh size $h = 1/8, 1/16, 1/32, 1/64$, see
Fig.~\ref{fig_mesh3d}.
In the numerical setting, 
we also randomly generate $10^6$ points in $\Omega$ whose
coordinates $(x, y, z) \sim (U(0, 1))^3$.
In every step, the point $\bm{p}$ with the host element $K$ is moved
by a random vector $\bm{v} = (r \sin \theta_1 \cos \theta_2, r \sin
\theta_1 \sin \theta_2, r \cos \theta_1)$, where $(r, \theta_1,
\theta_2) \sim U(0, \delta h_K) \times U(0, \pi) \times U(0, 2 \pi)$. 
For every point, we update $\bm{p} = \bm{p} + \bm{v}$ and seek its new
host element in one step.
We repeat this particle locating process for $100$ times and record the CPU
times.

The results are collected in Tab.~\ref{tab_ex2}, and the proposed
method has a similar performance as two dimensions.
\revise{
For all $\delta$, the CPU times to our method are also almost the
same, and our method is numerically detected to be more efficient than
the neighbour searching method.}{
For the three-dimensional test, we also compare the proposed method
with the neighbour searching method and the auxiliary structured grid
method. 
For all $\delta$, our method is numerically demonstrated to be
significantly more efficient than the other methods.
}
In three dimensions, the CPU times costed by the preparation stage
also linearly depends on $n_e$, but here $n_e$ grows much faster than
two dimensions. 
Nevertheless, the initialization only needs to be implemented once for
a given mesh. 
In the case that there are a large number of points that require to be
located, our algorithm will have a remarkable advantage of efficiency.

\begin{figure}[htp]
  \centering
  \includegraphics[width=0.22\textwidth]{./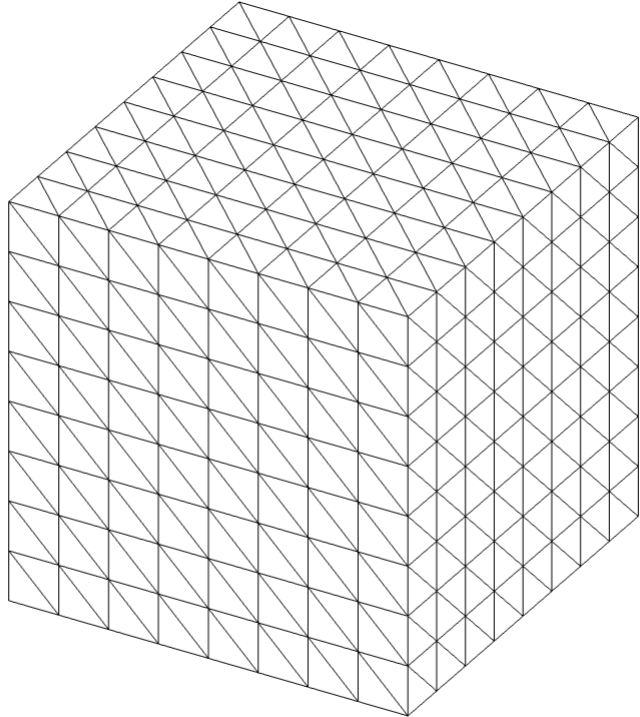}
  \hspace{60pt}
  \includegraphics[width=0.22\textwidth]{./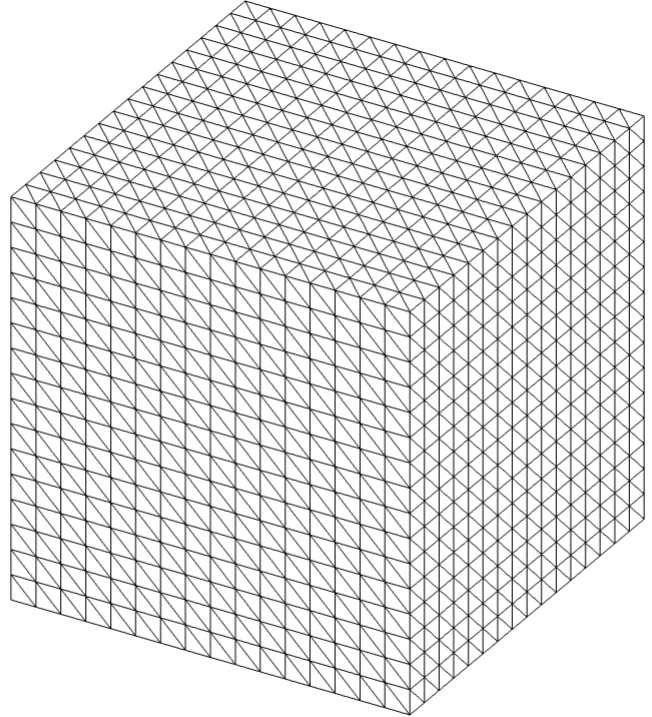}
  \caption{The tetrahedral meshes with $h = 1/8$ (left) / $h = 1/16$
  (right).}
  \label{fig_mesh3d}
\end{figure}

\begin{table}
  \centering
  \renewcommand{\arraystretch}{1.5}
  \scalebox{0.75}{
  \begin{tabular}{p{1.5cm}|p{1.9cm}|p{1.30cm}
    |p{1.30cm}|p{1.30cm}|p{1.30cm}|p{1.30cm}|p{1.30cm}
    |p{1.30cm}|p{1.30cm}|p{1.30cm}}
    \hline\hline
    & \multicolumn{4}{c|}{patch searching method} 
    & \multicolumn{3}{c|}{neighbour searching method} 
    & \multicolumn{3}{c}{auxiliary Cartesian grid method} \\
    \cline{2-11}
    & initialization & $\delta = 0.1$ & $\delta = 1$ & $\delta = 5$ & 
    $\delta = 0.1$ & $\delta = 1$ & $\delta = 5$ &
    $\delta = 0.1$ & $\delta = 1$ & $\delta = 5$ \\
    \hline
    $h = 1/8$ & 0.185
    & 4.165 & 4.163 & 4.171
    & 4.787 & 12.39 & 40.61 
    & 15.85 & 15.62 & 15.35 \\
    \hline
    $h = 1/16$ & 1.423 
    & 7.262 & 7.256 & 7.182
    & 8.296 & 22.83 & 69.43 
    & 25.29 & 25.76 & 25.66 \\
    \hline
    $h = 1/32$ & 11.57
    & 13.95 & 14.03 & 13.98
    & 16.32 & 53.51 &  196.08 
    & 42.69 & 43.71 &  42.61 \\
    \hline
    $h = 1/64$ & 89.51 
    & 18.35 & 18.27 & 18.19
    & 27.23 & 89.72 & 329.03  
    & 65.73 & 66.32 & 68.19  \\
    \hline\hline
  \end{tabular}}
  \caption{The CPU times in Example 3.}
  \label{tab_ex3}
\end{table}

\section{Conclusions}
\label{sec_conclusion}
\revise{}{
In this paper, we have introduced an efficient approach for particle 
locating on triangular and tetrahedral meshes in two and three dimensions. 
We first locate the given particle in a patch near a vertex, and then
seek the host element in the patch domain. 
The first step can be rapidly implemented by using an auxiliary
Cartesian grid with a prescribed grid spacing. 
In the second step, the task of finding the host element in the patch 
is shown to be equivalent to a searching problem, which can be
easily solved by standard searching algorithms.
The details of the computer implementation are presented in this paper.
Only coordinates of the given particles are required in the proposed
algorithms.
Numerical tests are carried out by locating randomly
distributed particles in both two and three dimensions.
The numerical results demonstrate remarkable advantages in terms of
efficiency of the proposed method.
}

\begin{appendix}
  \section{Geometrical Relations}
  \label{sec_app}
  In this appendix, we present some geometrical relations in two and
  three dimensions.
  \begin{lemma}
    For any edge $e \in \MEh$ with $\mN_e = \{\bv_1, \bv_2\}$, let
    $\bq \in e$ be a point on $e$ such that there exists a constant
    $\tau > 0$ satisfying $d(\bm{q}, \bv_1) > \tau$, $d(\bm{q},
    \bv_2) > \tau$, then there holds 
    \begin{equation}
      \begin{aligned}
        B(\bm{q}, \tau \sin \alpha) & \subset \mD{\mT_e}, &&
        \text{if} \ e \in \MEhi, \\
        (B(\bm{q}, \tau \sin \alpha) \cap \Omega) & \subset
        \mD{\mT_e}, && \text{if} \ e \in \MEhb,
      \end{aligned}
      \label{eq_app_BDT}
    \end{equation}
    where $\alpha$ is the minimum angle condition
    \eqref{eq_miniangle}.
    \label{le_app_disk}
  \end{lemma}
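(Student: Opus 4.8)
The plan is to estimate the distance from $\bm q$ to the boundary of the patch $\mD{\mT_e}$ and to show that it exceeds the radius $\tau\sin\alpha$, so that the ball cannot escape the patch. The first step is a combinatorial observation: every facet of $\partial\mD{\mT_e}$ (an edge of a triangle when $d=2$, a triangular face of a tetrahedron when $d=3$) must contain one of the endpoints $\bv_1,\bv_2$ of $e$, unless it contains $e$ itself and lies on $\partial\Omega$. Indeed, such a facet is a facet of some element $K\in\mT_e$ that is not shared with another element of $\mT_e$; a facet of $K$ either contains the whole edge $e$ or contains exactly one of $\bv_1,\bv_2$; and a facet containing $e$ that is shared with another element would force that element into $\mT_e$, so an unshared facet containing $e$ must lie on $\partial\Omega$, which is impossible when $e\in\MEhi$. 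Hence, for $e\in\MEhi$ every facet of $\partial\mD{\mT_e}$ passes through $\bv_1$ or $\bv_2$ and misses the interior of $e$, while for $e\in\MEhb$ the only extra facets are those containing $e$ and lying on $\partial\Omega$.

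The second step bounds $d(\bm q, F)$ for a facet $F$ passing through $\bv_i$ but not containing $e$. Since $\bv_i\in\mathrm{aff}(F)$ and the segment $\bv_i\bm q$ lies along $e$, we have $d(\bm q,F)\ge d(\bm q,\mathrm{aff}(F))=d(\bm q,\bv_i)\sin\theta$, where $\theta$ is the angle at $\bv_i$ between the line of $e$ and $\mathrm{aff}(F)$. The key estimate is $\theta\ge\alpha$: when $d=2$, $\mathrm{aff}(F)$ is the line of another edge of $K$ at $\bv_i$, so $\theta$ is a face angle of $K$ and $\theta\ge\alpha$ by \eqref{eq_miniangle}; when $d=3$, $F$ is the face of $K$ through $\bv_i$ that does not contain $e$, so $\theta$ is the angle between an edge of $K$ and the plane of a face meeting that edge only at one vertex, which is again bounded below by $\alpha$ as a consequence of the lower bounds on face and dihedral angles in \eqref{eq_miniangle} (for instance via spherical trigonometry on the unit sphere at $\bv_i$). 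It follows that $d(\bm q,F)\ge d(\bm q,\bv_i)\sin\alpha>\tau\sin\alpha$ for every such facet, so $\bm q$ is at distance strictly greater than $\tau\sin\alpha$ from the union of all facets of $\partial\mD{\mT_e}$ that do not contain $e$.

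The third step combines these facts. For $e\in\MEhi$ the boundary $\partial\mD{\mT_e}$ consists precisely of facets through $\bv_1$ or $\bv_2$, so $d(\bm q,\partial\mD{\mT_e})>\tau\sin\alpha$; since $\bm q\in\mD{\mT_e}$ and $\mD{\mT_e}$ is open, the ball $B(\bm q,\tau\sin\alpha)$ is contained in $\mD{\mT_e}$. For $e\in\MEhb$ the remaining facets of $\partial\mD{\mT_e}$ lie on $\partial\Omega$ and contain $e$; because the radius $\tau\sin\alpha$ is smaller than $\tau\le\min_i d(\bm q,\bv_i)$, the ball does not reach $\bv_1$ or $\bv_2$, and as $\Omega$ is polygonal (polyhedral), inside the ball $\partial\Omega$ coincides with those facets, so $B(\bm q,\tau\sin\alpha)\cap\Omega$ lies on the $\mD{\mT_e}$-side of them. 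Together with the distance estimate for the other facets this yields $(B(\bm q,\tau\sin\alpha)\cap\Omega)\subset\mD{\mT_e}$.

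The main obstacle is the three-dimensional geometric estimate $\theta\ge\alpha$ in the second step: unlike the planar case, the relevant angle between the edge $e$ and the affine hull of an outer face is not directly a face angle of $K$, and keeping it bounded away from $0$ requires using both the face-angle and the dihedral-angle parts of the minimum angle condition \eqref{eq_miniangle} (equivalently, the shape-regularity of $K$). The remaining points — that unshared facets not containing $e$ meet $e$ only at an endpoint, and that the polyhedral structure of $\Omega$ localizes $\partial\Omega$ inside a small ball around an interior point of a boundary edge — are routine.
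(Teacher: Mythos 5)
Your proof is correct and follows essentially the same route as the paper's: both arguments come down to the estimate $d(\bm{q},F)\ge d(\bm{q},\bv_i)\sin\alpha>\tau\sin\alpha$ for the facets of elements of $\mT_e$ that pass through an endpoint $\bv_i$ but do not contain $e$ (the paper phrases this as the half-disk $B^+(\bm{q},\tau\sin\alpha)$ lying inside each $K\in\mT_e$, you phrase it as a lower bound on the distance from $\bm{q}$ to $\partial\mD{\mT_e}$). The step you rightly single out as the main obstacle --- the three-dimensional bound $\theta\ge\alpha$ for the angle between $e$ and the plane of an outer face --- is precisely the step the paper also leaves unproved (it simply asserts the extension to $d=3$ is direct), and be aware that the spherical-trigonometry identity $\sin\theta=\sin(\text{face angle})\cdot\sin(\text{dihedral angle})$ only yields $\sin\theta\ge\sin^2\alpha$ at face value, so obtaining the stated constant $\tau\sin\alpha$ in three dimensions requires a sharper argument than either you or the authors supply.
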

  \begin{proof}
    We mainly prove for the two-dimensional case $d = 2$.
    Let $K$ be any element in $\mT_e$ sharing the face $e$, and we let
    $\beta_1$ and $\beta_2$ be two base angles of $K$ on $e$
    corresponding to $\bv_1$ and $\bv_2$, respectively,
    see Fig.~\ref{fig_baseangles}. 
    From the condition \eqref{eq_miniangle}, we know that $\beta_1
    \geq \alpha$ and $\beta_2 \geq \alpha$.
    Let $e_1$ and $e_2$ be other two edges of $K$ that share a common
    vertex $\bv_1, \bv_2$ with $e$, respectively. 
    Let $B(\bm{q}, \tau \sin \alpha)$ be the disk centered at $\bm{q}$
    with the radius $\tau \sin \alpha$, and we define $B^+(\bm{q},
    \tau \sin \alpha)$ as the half disk of $B(\bm{q}, \tau \sin
    \alpha)$ formed by cutting $B(\bm{q}, \tau \sin \alpha)$ alone $e$
    with the same side of $K$.
    Let $L_1, L_2$ be lines along the edge $e_1, e_2$, respectively.
    Since $d_i = d(\bm{q}, \bv_i) > \tau$ for both $i = 1, 2$, we know
    that $d(\bm{q}, L_i) \geq d_i \sin \alpha > \tau \sin \alpha$, see
    Fig.~\ref{fig_baseangles}.
    The two estimates immediately bring us that the half disk
    $B^+(\bm{q}, \tau \sin \alpha) \subset K$ for $\forall K \in
    \mT_e$.
    Consequently, we conclude that $B(\bm{q}, \tau \sin \alpha)
    \subset \mT_{e}$ if $e \in \MEhi$ and $(B(\bm{q}, \tau \sin
    \alpha) \cap \Omega) \subset \mT_{e}$ if $e \in \MEhb$, i.e. the
    relation \eqref{eq_app_BDT} is reached.
    The proof can be directly extended to the case of $d = 3$ without
    any difficult.
    This completes the proof.

    \begin{figure}[htp]
      \centering
      \scriptsize
      \begin{tikzpicture}[scale = 3.9]
        \draw[thick] (0, 0) -- (1.2, 0);
        \draw[thick] (0, 0) -- (0.7, 0.9);
        \draw[thick] (0, 0) -- (1.2, 0) -- (0.7, 0.9);
        \node[left] at (0, 0) {$\bv_1$};
        \node[right] at (1.2, 0) {$\bv_2$};
        \draw[thick, fill=black] (0, 0) circle [radius=0.012];
        \draw[thick, fill=black] (1.2, 0) circle [radius=0.012];
        \draw[thick, dashed, ->] (0.7, 0) -- (1.06, 0.24);
        \node[] at (0.85, 0.2) {$h_{2}$};
        \draw[thick, dashed, ->] (0.7, 0) -- (0.31, 0.39);
        \node[] at (0.47, 0.35) {$h_1$};
        \fill (0.7,0) circle [radius=0.012] node[anchor=north] {$\bm{q}$};
        \draw (0.4,0) -- (0.4,0) node[midway, below] {$d_{1}$};
        \draw (1,0) -- (1,0) node[midway, below] {$d_{2}$};
        \draw (0.1,0) arc (0:53:0.1) node[midway, right] {$\beta_1$};
        \draw (1.1,0) arc (-180:-240:0.1);
        \draw[thick] (1.05,0) arc (0:180:0.35);
        \node at (0.75, 0.4) {$B^+(\bm{q}, \tau \sin \alpha)$};
        \node at (1.22, 0.07) {$\beta_2$};
        \node[left] at (0.28, 0.39) {$e_1$};
        \node[right] at (1.08, 0.27) {$e_2$};
      \end{tikzpicture}
      \caption{The element $K$ and the half disk $B^+(\bm{q}, \tau \sin
      \alpha) \subset K$.}
      \label{fig_baseangles}
    \end{figure}
  \end{proof}

  \newcommand{\tableincell}[2]{\begin{tabular}{@{}#1@{}}#2
  \end{tabular}}

  \section{list of Notation}

  \begin{table}[htp]
    \centering
    \renewcommand{\arraystretch}{1.53}
    \scalebox{0.75}{
    \begin{tabular}{p{3cm}p{6.2cm}p{8.3cm}}
      \hline\hline
      {\textbf{\itshape Notation}}
      & \textbf{Description} & \textbf{Definition/Note}
      \\
      \hline
      $\Omega$ & computational domain & - \\
      \hline
      $\MTh$ & mesh over $\Omega$ & $K$, $K_*$($K_1$, $K_2$, \ldots)
      usually denote
      elements in $\MTh$ \\
      \hline
      $h_K$, $\rho_K$, $w_K$ & parameters about the element $K$ & 
      \tableincell{l}{$h_K$: diameter of the circumscribed ball of
      $K$ \\
      $\rho_K$: radius of the inscribed ball of $K$ \\ 
      $w_K$: width of $K$}
      \\
      \hline
      $h$, $\rho$, $w$, $C_{\nu}$ & parameters about the mesh $\MTh$ & 
      \tableincell{l}{$h$: mesh size, $h := \max_{K \in \MTh} h_K$ \\
      $\rho := \min_{K \in \MTh} \rho_K$ \\
      $w := \min_{K \in \MTh} w_K$ \\
      $C_\nu$: regularity parameter, $h \leq C_{\nu} \rho$} 
      \\
      \hline
      $\alpha$ & minimum angle condition & definition
      \eqref{eq_miniangle} \\
      \hline
      $\MNh$, $\MNhi$, $\MNhb$ & set of nodes in $\MTh$ & 
      \tableincell{l}{$\MNh = \MNhi + \MNhb$ \\
      $\MNhi$: set of interior
      nodes \quad
      $\MNhb:$ set of boundary nodes \\ } \\
      \hline
      $\mN_K$ & set of vertices of $K$ & $\mN_K := \{ \bm{v} \in \MNh:
      \ \bm{v} \in \partial K \}$ \\
      \hline
      $B(\bm{z}, r)$ & ball centered at $\bm{z}$ with radius $r$ &
      $\partial B(\bm{z}, r)$: sphere of $B(\bm{z}, r)$
      \\
      \hline
      $\mT_{\bv}$, $\mD{\mT_{\bv}}$ & patch of the vertex $\bv$ & 
      \tableincell{l}{$\mT_{\bv}  := \{K \in
      \MTh: \ \bv \in \partial {K} \}$, set of elements around $\bv$ \\ 
      $\mD{\mT_{\bv}} := \text{Int}(\bigcup_{K \in \mT_{\bv}}
      \overline{K})$, domain of $\mT_{\bv}$ \\} \\
      \hline
      $\MEh$, $\MEhi$, $\MEhb$ & set of edges in $\MTh$ & 
      \tableincell{l}{$\MEh = \MEhi + \MEhb$ \\
      $\MEhi:$ set of interior
      edges \quad 
      $\MEhb:$ set of boundary edges \\} \\
      \hline
      $\mN_e$ & set of vertices of $e$ &  $\mN_e := \{ \bv \in \MNh: \
      \bv \in \overline{e}\} $ \\ 
      \hline
      $\mT_{e}$, $\mD{\mT_{e}}$ & patch of the edge $e$ &
      \tableincell{l}{$\mT_{e} := \{K \in \MTh: \ e \subset \partial K\}$:
      set of elements around $e$ \\ $\mD{\mT_e} := \text{Int}(\bigcup_{K \in
      \mT_e} \overline{K})$, domain of $\mT_{e}$ \\ }
      \\
      \hline
      $\mE_{\bv}$ & set of edges sharing the vertex $\bv$ &
      $\mE_{\bv} := \{e \in \MEh: \ \bv \in \mN_{e}\}$  \\
      \hline
      $\MFh$, $\MFhi$, $\MFhb$ & set of faces in $\MTh$ & 
      \tableincell{l}{$\MFh = \MFhi + \MFhb$ \\
      $\MFhi$: set of interior
      faces \quad
      $\MFhb$: set of boundary faces \\
      faces are used in three dimensions \\} \\
      \hline
      $\Omega^*$ & background domain covering $\Omega$ & $\Omega
      \subset \Omega^*$ \\
      \hline
      $\MCs$ & Cartesian grid on $\Omega^*$ & \tableincell{l}{$s$:
      grid spacing \\ $T$, $T_*$($T_1, T_2, \ldots$) usually denote
      cells in $\MCs$ \\} \\
      \hline
      $\MMs$ & set of nodes in $\MCs$ & - \\
      \hline
      $\mM_T$ & set of vertices of $T$ &  $\mM_T := \{ \bvs \in \MMs:
      \ \bvs \in \partial T \}$   \\
      \hline
      $\MCsc$ & set of all active cells &  $\MCsc := \{T \in \MCs: \ |T
      \cap \Omega| > 0\}$ \\ 
      \hline
      $d(\cdot, \cdot)$ & distance function & - \\
      \hline
      $\MCscI, \MCscB$, $\MCscF, \MCscE$ & active cells are divided
      into several types & \tableincell{l}{ definition
      \eqref{eq_ccell} in two dimensions \\
      definition \eqref{eq_cell3d} in three dimensions \\} \\
      \hline
      $\hat{\Theta}$ & pseudo angle function & definition
      \eqref{eq_pangle} \\
      \hline
      $\varphi$ & relation mapping from $\MCsc$ to $\MNh$ & cell $T$
      is included in the patch $\mD{\mT_{\varphi(T)}}$   \\
      \hline
      $\MCec$ & set of active Cartesian cells in $\mD{\mT_e}$ &
      $\MCec = \{T \in \MCsc: \ (T \cap \Omega) \subset \mD{\mT_e}
      \}$, $\MCMEhc := \bigcup_{e \in \MEh} \MCec$ 
      \\
      \hline
      $\MCBvc$ & set of cells intersecting with $\partial B(\bv, w^*)$
      &  $\MCBvc := \{T \in \MCsc: \ |\overline{T} \cap \partial
      B(\bv, w^*)| > 0 \}$ \\
      \hline
      $\MCNhssc$ & set of cells that are associated with edges, 
      $\MCNhsc \subset \MCNhssc \subset \MCMEhc$ &
      $\MCNhssc$ is constructed in the initializing stage
      simultaneously with mappings $\varphi, \phi$ \\
      \hline
      $\phi$ & mapping from $\MCNhssc$ to $\MEh$ & cell $T$ is
      included in the patch $\mD{\mT_{\phi(T)}}$ \\
      \hline\hline
    \end{tabular}
    }
    \caption{List of notation.}
    \label{tab_notation}
  \end{table}

\end{appendix}


\begin{thebibliography}{10}

\bibitem{Kuang2008new}
Kuang~S. B., Yu~A. B., and Zou Z.S., \emph{A new point-locating algorithm under
  three-dimensional hybrid meshes}, Int. J. of Multiphase Flow \textbf{34}
  (2008), 1023--1030.

\bibitem{Brandts2008equivalence}
J.~Brandts, S.~Korotov, and M.~K\v{r}\'{\i}\v{z}ek, \emph{On the equivalence of
  regularity criteria for triangular and tetrahedral finite element
  partitions}, Comput. Math. Appl. \textbf{55} (2008), no.~10, 2227--2233.

\bibitem{Capodaglio2017particle}
G.~Capodaglio and Aulisa E., \emph{A particle tracking algorithm for parallel
  finite element approximations}, Comput. \& Fluids \textbf{159} (2017),
  338--355.

\bibitem{Chen1999new}
X.~Q. Chen and Pereira J.~C. F., \emph{A new particle-locating method
  accounting for source distribution and particle-field interpolation for
  hybrid modeling of strongly coupled two-phase flows in arbitrary
  coordinates}, Numer. Heat Transfer, Part B \textbf{35} (1999), 41--63.

\bibitem{Chorda2002efficient}
R.~Chord\'a, J.~A. Blasco, and N.~Fueyo, \emph{An efficient particle-locating
  algorithm for application in arbitrary 2d and 3d grids}, Int. J. of
  Multiphase Flow \textbf{28} (2002), no.~9, 1565--1580.

\bibitem{geuzaine2009gmsh}
C.~Geuzaine and J.~F. Remacle, \emph{Gmsh: {A} 3-{D} finite element mesh
  generator with built-in pre- and post-processing facilities}, Internat. J.
  Numer. Methods Engrg. \textbf{79} (2009), no.~11, 1309--1331.

\bibitem{Haselbacher2007efficient}
A.~Haselbacher, F.~M. Najjar, and J.~P. Ferry, \emph{An efficient and robust
  particle-localization algorithm for unstructured grids}, J. Comput. Phys.
  \textbf{225} (2007), no.~2, 2198--2213.

\bibitem{Li2001efficient}
G.~Li and Modest~M. F., \emph{An efficient particle tracing schemes for
  structured/unstructured grids in hybrid finite volume/pdf monte carlo
  methods}, J. Comput. Phys. \textbf{173} (2001), 187--207.

\bibitem{Li2019fast}
Z.~Li, Y.~Wang, and L.~Wang, \emph{A fast particle-locating method for the
  arbitrary polyhedral mesh}, Algorithms (Basel) \textbf{12} (2019), no.~9,
  Paper No. 179, 16.

\bibitem{Lowier1990vectorized}
R.~L\"owier, \emph{A vectorized particle tracer for unstructured grids}, J.
  Comput. Phys. \textbf{87} (1990), no.~2, 496.

\bibitem{Lowier1995robust}
\bysame, \emph{Robust, vectorized search algorithms for interpolation on
  unstructured grids}, J. Comput. Phys. \textbf{118} (1995), 380--387.

\bibitem{Macpherson2009particle}
G.~B. Macpherson, Nordin N., and Weller~H. G., \emph{Particle tracking in
  unstructured, arbitrary polyhedral meshes for use in {CFD} and molecular
  dynamics}, Comm. Numer. Methods Engrg. \textbf{25} (2009), no.~3, 201--300.

\bibitem{Martin2009particle}
G.~D. Martin, E.~Loth, and Lankford D., \emph{Particle host cell determination
  in unstructured grids}, Comput. \& Fluids \textbf{38} (2009), 101--110.

\bibitem{Muradoglu2006auxiliary}
M.~Muradoglu and A.~D. Kayaalp, \emph{An auxiliary grid method for computations
  of multiphase flows in complex geometries}, J. Comput. Phys. \textbf{214}
  (2006), no.~2, 858--877.

\bibitem{Sani2009set}
M.~Sani and M.~S. Saidi, \emph{A set of particle locating algorithms not
  requiring face belonging to cell connectivity data}, J. Comput. Phys.
  \textbf{228} (2009), no.~19, 7357--7367.

\bibitem{Seldner1988algorithms}
D.~Seldner and T.~Westermann, \emph{Algorithms for interpolation and
  localization in irregular 2{D} meshes}, J. Comput. Phys. \textbf{79} (1988),
  no.~1, 1--11.

\bibitem{Wang2022GPU}
B.~Wang, I.~Wald, N.~Morrical, W.~Usher, L.~Mu, K.~Thompson, and R.~Hughes,
  \emph{An {GPU}-accelerated particle tracking method for
  {E}ulerian-{L}agrangian simulations using hardware ray tracing cores},
  Comput. Phys. Commun. \textbf{271} (2022), Paper No. 108221, 9.

\end{thebibliography}

\providecommand{\bysame}{\leavevmode\hbox to3em{\hrulefill}\thinspace}
\providecommand{\MR}{\relax\ifhmode\unskip\space\fi MR }
\providecommand{\MRhref}[2]{%
  \href{http://www.ams.org/mathscinet-getitem?mr=#1}{#2}
}
\providecommand{\href}[2]{#2}

\end{document}